\crefname{hypothesis}{Hypothesis}{Hypotheses}
\Crefname{ALC@unique}{Step}{Step}
\renewcommand{\algorithmiccomment}[1]{\hfill $\triangleright$ \emph{#1}}
\newcounter{example}
\newenvironment{example}{\refstepcounter{example}\vspace{1ex}
{\sc Example \theexample.}\hspace{0.3em}\parindent=0pt}{\vspace{1ex}}
\begin{document}

\title{Analytical low-rank compression via proxy point selection\thanks{Submitted for review.
\funding{The research of Jianlin Xia was supported in part by an NSF grant DMS-1819166.}}}

\author{Xin Ye\thanks{Department of Mathematics, Purdue University, West Lafayette, IN 47907 (\email{ye83@purdue.\allowbreak edu}, \email{xiaj@purdue.\allowbreak edu}).}
\and Jianlin Xia\footnotemark[2]
\and Lexing Ying\thanks{Department of Mathematics and Institute for Computational and Mathematical Engineering, Stanford University, Stanford, CA 94305 (\email{lexing@stanford.edu}).}}

\headers{X. Ye, J. XIA, and L. Ying}{Low-rank compression via proxy point selection}

\maketitle

\begin{abstract}
It has been known in potential theory that, for some kernels matrices corresponding to well-separated point sets, fast analytical low-rank
approximation can be achieved via the use of proxy points. This proxy point method gives a surprisingly convenient
way of explicitly writing out approximate basis matrices for a kernel matrix. However, this elegant strategy is rarely
known or used in the numerical linear algebra community. It still needs clear algebraic understanding of the theoretical background.
Moreover, rigorous quantifications of the approximation errors and reliable criteria for the selection of the proxy points
are still missing.
In this work, we use contour integration to clearly justify the idea in terms of a class of important kernels.
We further provide comprehensive accuracy analysis for the analytical compression and show how to choose nearly optimal proxy points.
The analytical compression is then combined with fast rank-revealing factorizations to get compact low-rank approximations
and also to select certain representative points. We provide the error bounds for the resulting overall low-rank approximation.
This work thus gives a fast and reliable strategy for compressing those kernel matrices. Furthermore,
it provides an intuitive way of understanding the proxy point method
and bridges the gap between this useful analytical strategy and practical low-rank approximations.
Some numerical examples help to further illustrate the ideas.
\end{abstract}

\begin{keywords}
kernel matrix, proxy point method, low-rank approximation, approximation error analysis, hybrid compression, strong rank-revealing factorization
\end{keywords}

\begin{AMS}
15A23, 65F30, 65F35
\end{AMS}

\section{Introduction}
\label{sec:intro}

In this paper, we focus on the low-rank approximation of some kernel matrices: those generated by a smooth kernel function $\kappa(x,y)$ evaluated at two well-separated sets of points $X = \{ x_j \}_{j=1}^m $ and $Y = \{ y_j \}_{j=1}^n$.
We suppose $\kappa(x,y)$ is analytic and a degenerate approximation as follows exists:
\begin{equation}\label{eqn:degenerate}
\kappa(x,y) \approx  \sum_{j=1}^{r} \alpha_j \psi_j(  x) \varphi_j(y),
\end{equation}
where $\psi_j$'s and $\varphi_j$'s are appropriate basis functions and $\alpha_j$'s are coefficients independent of $x$ and $y$. $X$ and $Y$ are well separated in the sense that the distance between them is comparable to their diameters so that $r$ in (\ref{eqn:degenerate}) is small.
In this case, the corresponding discretized kernel matrix as follows is numerically low rank:
\begin{equation}\label{eq:k}
K^{(X,Y)}\equiv(\kappa(x,y)_{x\in X,y\in Y}).
\end{equation}

This type of problems frequently arises in a wide range of computations such as numerical solutions of PDEs and integral equations, Gaussian processes, regression with massive data, machine learning, and $N$-body problems. The low-rank approximation to $K^{(X,Y)}$ enables
fast matrix-vector multiplications in methods such as the fast multipole method (FMM) \cite{greengard1987fast}. It can also be used
to quickly compute matrix factorization and inversion based on rank structures such as $\mathcal{H}$ \cite{hackbusch1999sparse}, $\mathcal{H}^2$ \cite{bor02,hackbusch2000h2}, and HSS \cite{chandrasekaran2006fast,xia2010fast} forms.
In fact, relevant low-rank approximations play a key role in rank-structured methods. The success of the so-called fast rank-structured direct solvers relies heavily on the quality and efficiency of low-rank approximations.

According to the Eckhart-Young Theorem \cite{eckart1936approximation}, the best 2-norm low-rank approximation is given by the truncated SVD,
which is usually expensive to compute directly. More practical \emph{algebraic compression} methods include rank-revealing factorizations
(especially strong rank-revealing QR \cite{gu1996efficient} and strong rank-revealing LU factorizations
\cite{miranian2003strong}), mosaic-skeleton
approximations \cite{tyr96}, interpolative decomposition \cite{cheng2005compression}, CUR decompositions \cite{mah09}, etc.
Some of these algebraic methods have a useful feature of {\it structure preservation} for $K^{(X,Y)}$:
relevant resulting basis matrices can be submatrices of the original
matrix and are still discretizations of $\kappa(x,y)$ at some subsets. This is a very useful feature that can
greatly accelerate some hierarchical rank structured direct solvers \cite{toeprs,liu2016parallel,mfhssrs}.
However, these algebraic compression methods have
$\mathcal{O}(rmn)$ complexity and are very costly for large-scale applications. The efficiency may be improved by randomized SVDs \cite{halko2011finding,gu2015subspace,martinsson2017householder}, which still cost $\mathcal{O}(rmn)$ flops.


Unlike fully algebraic compression, there are also various \emph{analytical compression} methods that take advantage of degenerate approximations
like in (\ref{eqn:degenerate}) to compute low-rank approximations. The degenerate approximations may be obtained by Taylor expansions, multipole expansions \cite{greengard1987fast}, spherical harmonic basis functions \cite{sun2001matrix}, Fourier transforms with Poisson's formula \cite{anderson1992implementation,makino1999another}, Laplace transforms with the Cauchy integral formula \cite{letourneau2014cauchy}, Chebyshev interpolations \cite{fong2009black}, etc. Various other polynomial basis functions may also be used \cite{oneil2007new}.

These analytical approaches can quickly yield low-rank approximations to $K^{(X,Y)}$ by explicitly producing
approximate basis matrices. On the other hand, the resulting low-rank approximations are
usually not structure preserving in the sense that the basis matrices are not directly related to $K^{(X,Y)}$. This is because the basis
functions $\{  \psi_j  \}$ and $\{ \varphi_j \}$ are generally different from $\kappa(x,y)$.

As a particular analytical compression method, the \emph{proxy point method} has attracted a lot of interests in recent years. It is tailored for kernel matrices and is very attractive for different geometries of points \cite{fong2009black,martinsson2005fast,xing2018efficient,ying2006kernel,ying2004kernel}. While the methods vary from one to another, they all share the same basic idea and can be summarized in the surprisingly simple \cref{alg:proxy}, where the details are omitted and will be discussed later in later sections.
Note that an explicit degenerate form \cref{eqn:degenerate} is not needed and the algorithm directly produces the matrix $K^{(X,Z)}\equiv(\kappa(x,y)_{x\in X,y\in Z})$ as an
approximate column basis matrix in \cref{step:compress}. This feature enables the extension of the ideas of the classical fast multipole method (FMM) \cite{greengard1987fast} to more general situations, and examples include the recursive skeletonization \cite{ho2012fast,martinsson2005fast,minden2017recursive} and kernel independent FMM \cite{martinsson2007accelerated,ying2006kernel,ying2004kernel}.

\begin{algorithm}[!h]
    \caption{\it Basic proxy point method for low-rank approximation}
    \emph{Input:} $\kappa(x,y)$, $X$, $Y$ \\
    \emph{Output:} Low-rank approximation $K^{(X,Y)} \approx AB$\mbox{}\hfill\algorithmiccomment{\it Details in \cref{sec:compression,sec:optimal}}
    \label{alg:proxy}
    \vspace{-4mm}
    \begin{algorithmic}[1]
        \STATE\label{step:proxy}{Pick a \emph{proxy surface} $\Gamma$ and a set of \emph{proxy points} $Z \subset \Gamma $}
        \STATE{$A\leftarrow K^{(X,Z)}$}\label{step:compress}
        \STATE{$B\leftarrow \Phi^{(Z,Y)}$ for a matrix $\Phi^{((Z,Y)}$ such that $K^{X,Y)} \approx K^{(X,Z)}  \Phi^{(Z,Y)}$}
    \end{algorithmic}
\end{algorithm}

Notice that $|Z|$ is generally much smaller than $|Y|$ so that $K^{(X,Z)}$ has a much smaller column size than $K^{(X,Y)}$.
It is then practical to apply reliable rank-revealing factorizations to $K^{(X,Z)}$ to extract a compact
approximate column basis matrix for $K^{(X,Y)}$. This is a {\it hybrid (analytical/algebraic) compression} scheme,
and the proxy point method helps to significantly reduce the compression cost.

The significance of the proxy point method can also be seen from another viewpoint: the selection
of {\it representative points}. When a strong rank-revealing QR (SRRQR) factorization or interpolative decomposition is applied to
$K^{(X,Y)}$, an approximate row basis matrix can be constructed from selected rows of $K^{(X,Y)}$. Suppose those rows correspond to
the points $\hat{X}\subset X$. Then $\hat{X}$ can be considered as a subset of representative points. The analytical selection of $\hat{X}$ is
not a trivial task. However, with the use of the proxy points $Z$, we can essentially quickly find $\hat{X}$ based on $K^{(X,Z)}$. (See \cref{sec:hybrid} for more details.) That is, the set of proxy points $Z$ can serve as a set of auxiliary points based on which the representative points can be quickly identified. In another word,
when considering the interaction $K^{(X,Y)}$ between $X$ and $Y$, we can use the interaction $K^{(X,Z)}$ between $X$ and
the proxy points $Z$ to extract the contribution $\hat{X}$ from $X$.

Thus, the proxy point method is a very convenient and useful tool for researchers working on kernel matrices.
However, this elegant method is
much less known in the numerical linear algebra community. Indeed, even the compression of some special Cauchy matrices (corresponding
to a simple kernel) takes quite some efforts in matrix computations \cite{mar05,pan15,toeprs}. In a recent literature survey \cite{kis17} that lists
many low-rank approximation methods (including a method for kernel matrices), the proxy point method is not mentioned at all.
One reason that the proxy point method is not widely known by researchers in matrix computation is the lack of
intuitive algebraic understanding of the background.

Moreover, in contrast with the success of the proxy point method in various practical applications, its theoretical justifications
are still lacking in the literature. Potential theory \cite[Chapter 6]{kress2014linear} can be used to explain the choice of proxy surface $\Gamma$ in \cref{step:proxy} of \cref{alg:proxy} when dealing with some PDE kernels (when $\kappa(x,y)$ is
the fundamental solution of a PDE). However, there is no clear justification
of the accuracy of the resulting low-rank approximation.
Specifically, a clear explanation of such a simple procedure in terms of both the approximation error and the proxy point selection desired, especially from the linear algebra point of view.

Thus, we intend to seek a convenient way to {\it understand the proxy point method and its accuracy} based on some kernels.
The following types of errors will be considered (the notation will be made more precise later):
\begin{itemize}
\item The error $\varepsilon$ for the approximation
of kernel functions $\kappa(x,y)$ with the aid of proxy points.
\item The error $\mathcal{E}$ for the
low-rank approximation of kernel matrices $K^{(X,Y)}$ via the proxy point method.
\item The error $\mathcal{R}$ for
practical hybrid low-rank approximations of $K^{(X,Y)}$ based on the proxy point method.
\end{itemize}

Our main objectives are as follows.
\begin{enumerate}
\item Provide an intuitive explanation of the proxy point method using contour integration so as to
make this elegant method more accessible to the numerical linear algebra community.

\item Give systematic analysis of the approximation errors of
the proxy point method as well as the hybrid compression. We show how the
kernel function approximation error $\varepsilon$ and the low-rank compression error $\mathcal{E}$ decay exponentially
with respect to the number of proxy points. We also show how our bounds for the error $\mathcal{E}$ are nearly
independent of the geometries and sizes of $X$ and $Y$ and why a bound for the error $\mathcal{R}$
may be independent of one set (say, $Y$).

\item Use the error analysis to choose a nearly optimal set of proxy points in the low-rank kernel matrix compression.
Our error bounds give a clear guideline to control the errors and to choose the locations of the proxy points
so as to find nearly minimum errors. We also give a practical method to quickly estimate the optimal locations.
\end{enumerate}

We conduct such studies based on kernels of the form
\begin{equation}\label{eqn:kernel}
    \kappa(x,y) = \frac{1}{(x - y)^d},\quad x,y \in \mathbb{C},\quad x \neq y,
\end{equation}
where $d$ is a positive integer. Such kernels and their variants are very useful in PDE and integral equation solutions,
structured ODE solutions \cite{fastodesol}, Cauchy matrix computations \cite{pan15},
Toeplitz matrix direct solutions \cite{toep,mar05,toeprs}, structured divide-and-conquer Hermitian eigenvalue solutions \cite{gu95,hsseig}, etc.
Our derivations and analysis may also be useful for studying
other kernels and higher dimensions. This will be considered in future work.
(Note that the issue of what kernels the proxy point method can apply to is not the focus here.)

We would like to point out that several of our results like the error analyses in \cref{sec:optimal,sec:hybrid} can
be easily extended to more general kernels and/or with other approximation methods, as long as
a relative approximation error for the kernel function approximation is available.
Thus, our studies are useful for more general situations.

Our theoretical studies
are also accompanied by various intuitive numerical tests which show that the error bounds nicely capture the
error behaviors and also predict the location of the minimum errors.

In the remaining discussions, \cref{sec:compression} is devoted to an intuitive derivation of the
proxy point method via contour integration and the analysis of the accuracy ($\varepsilon$)
for the approximation of the kernel functions. The analytical low-rank compression accuracy ($\mathcal{E}$)
and the nearly optimal proxy point selection are given in \cref{sec:optimal}.
The study is further extended to the analysis of the
hybrid low-rank approximation accuracy ($\mathcal{R}$) with representative point selection in \cref{sec:hybrid}.
Some notation we use frequently in the paper is listed below.
\begin{itemize}
    \item The sets under consideration are $X = \{ x_j \}_{j=1}^m$ and $Y = \{ y_j \}_{j=1}^n$. $Z= \{ z_j \}_{j=1}^N$ is the set of proxy points.
    \item $ \mathcal{C}(a;\gamma)$, $\mathcal{D}(a;\gamma)$, and $ \bar{\mathcal{D}} (a;\gamma)$ denote respectively the circle, open disk, and closed disk with center $a\in \mathbb{C} $ and radius $\gamma > 0 $.

    \item $\mathcal{A} ( a; \gamma_1, \gamma_2)  = \{ z :  \gamma_1 <  |z - a| < \gamma_2   \} $ with $0 < \gamma_1 < \gamma_2$ is an open annulus region.

    \item $K^{(X,Y)}$ is the $m\times n$ kernel matrix $(\kappa(x_i, y_j)_{x_i\in X,y_j\in Y})$ with $\kappa(x,y)$ in \cref{eqn:kernel}. Notation such as $K^{(X,Z)}$ and $K^{(\hat{X},Z)}$ will also be used and can be understood similarly.
\end{itemize}

\section{The proxy point method for kernel function approximation and its accuracy}
\label{sec:compression}

In this section, we show one intuitive derivation of the proxy point method for the analytical approximation of the kernel functions,
followed by detailed approximation error analysis.

Note that the kernel function \cref{eqn:kernel} is translation invariant, i.e., $\kappa(x-z, y - z) = \kappa(x,y)$ for any $x \neq y $
and $z \in \mathbb{C}$. Thus, the points $X$ can be moved to be clustered around the origin. Without loss of generality,
we always assume $X \subset \mathcal{D} (0;\gamma_1)$ and $Y \subset \mathcal{A} ( 0; \gamma_2, \gamma_3) $, where the radii satisfy $0 < \gamma_1 < \gamma_2 < \gamma_3$. See \cref{fig:1}. Such situations arise frequently in applications of the FMM.
\begin{figure}[h]
\centering
\includegraphics[height=1.8in]{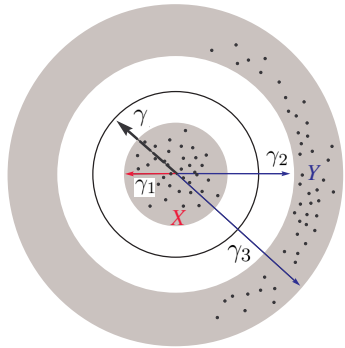}
\caption{Illustration of $\gamma$, $\gamma_1$, $\gamma_2$, $\gamma_3$, $X$, and $Y$.}
\label{fig:1}
\end{figure}

\subsection{Derivation of the proxy point method via contour integration}
\label{subsec:derivation}

Consider any two points $x \in X$ and $y \in Y$. Draw a Jordan curve (a simple closed curve) $\Gamma$ that encloses $x$ while excluding $y$, and let $\rho>0$ be large enough so that the circle $\mathcal{C}(0;\rho)$ encloses both $\Gamma$ and $y$. See \cref{fig:2a}.
\begin{figure}[ptbh]
\centering
\subfloat[$\Gamma$ and $\mathcal{C}(0;\rho)$ used in contour integration]{\label{fig:2a}\qquad\includegraphics[height=1.8in]{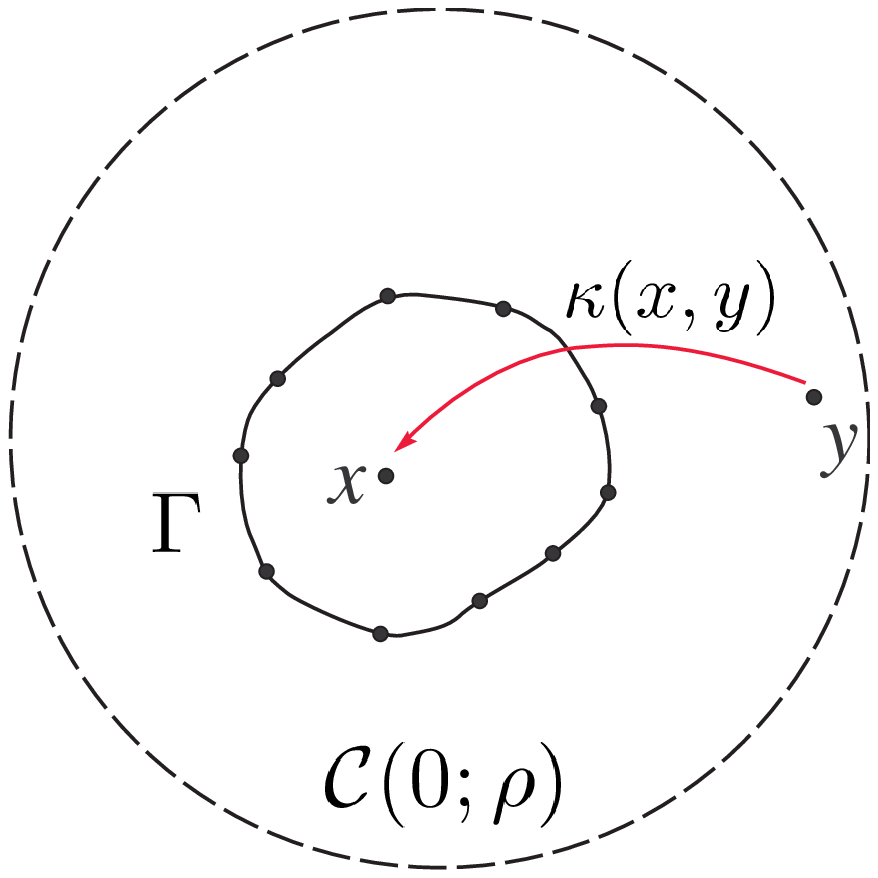}\qquad}
\subfloat[Approximation of $\kappa(x,y)$]{\label{fig:2b}\quad\includegraphics[height=1.8in]{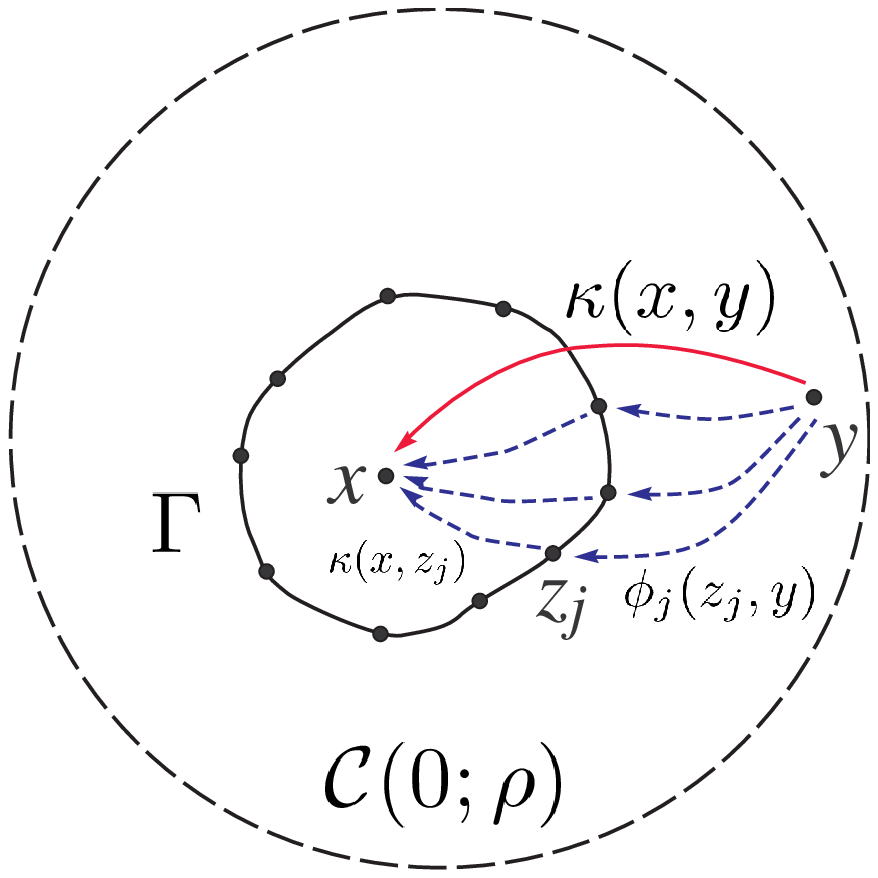}\quad}
\caption{Approximating the interaction $\kappa(x,y)$ by $\tilde{\kappa} (x,y)$ in \cref{eqn:kn} using proxy points.}
\label{fig:2}
\end{figure}

Define the domain $\Omega_{\rho}$ to be the open region inside $\mathcal{C}(0;\rho)$ and outside $\Gamma$. Its boundary is $ \partial \Omega_{\rho}:=\mathcal{C}(0;\rho)\cup(-\Gamma)$, where $-\Gamma$ denotes the curve $\Gamma$ in its negative direction. Now consider the function $f(z) := \kappa(x,z)$ on the closed domain $\bar{\Omega}_{\rho}  := \Omega_{\rho} \cup \partial \Omega_{\rho}$. The only singularity of $f(z)$ is at $z = x \notin \bar{\Omega}_{\rho}$. Thus, $f(z)$ is analytic (or holomorphic) on $\bar{\Omega}_{\rho}$. By the Cauchy integral formula \cite{stein2003complex},
\begin{equation}\label{eqn:cauchy}
\kappa(x,y)  = f(y)  = \frac{1}{ 2 \pi \mathbf{i}} \int_{\partial \Omega_{\rho}}   \frac{f(z)}{z - y} \mathrm{d} z\\
  = \frac{1}{ 2 \pi \mathbf{i}} \int_{\mathcal{C}(0;\rho)} \frac{\kappa(x,z)}{z - y} \mathrm{d} z  - \frac{1}{ 2 \pi \mathbf{i}} \int_{\Gamma}   \frac{\kappa(x,z)}{z-y} \mathrm{d} z,
\end{equation}
where $\mathbf{i}=\sqrt{-1}$.
Note that
\begin{align*}
\left| \int_{\mathcal{C}(0;\rho)}   \frac{\kappa(x,z)}{z - y} \mathrm{d} z  \right| &   \leq 2 \pi {\rho} \cdot \max_{z \in {\mathcal{C}(0;\rho)}} \left| \frac{1}{(x-z)^d(z - y)} \right| \leq \frac{2 \pi {\rho}}{ (\rho - |x|)^d(\rho - |y|) },
\end{align*}
where the right-hand side goes to zero when $\rho\to \infty $. Thus,
\begin{equation*}
\lim_{{\rho} \to \infty } \int_{\mathcal{C}(0;\rho)}   \frac{\kappa(x,z)}{z - y} \mathrm{d} z  = 0.
\end{equation*}
Take the limit on \cref{eqn:cauchy} for ${\rho} \to \infty$, and the first term on the right-hand side vanishes. We get
\begin{equation}\label{eqn:coutour_integral}
\kappa(x,y) = \frac{1}{ 2 \pi \mathbf{i}} \int_\Gamma   \frac{\kappa(x,z)}{y - z} \mathrm{d} z.
\end{equation}
Note that this result is different from the Cauchy integral formula in that the point $y$ under consideration is outside the contour $\Gamma$ in the integral.

To numerically approximate the contour integral \cref{eqn:coutour_integral}, pick an $N$-point quadrature rule with quadrature points $\{z_j \}_{j=1}^N \subset \Gamma$ and
the corresponding quadrature weights $ \{ \omega_j  \}_{j=1}^N$. Denoted by $\tilde{\kappa}( x,y)$ the approximation induced by
such a quadrature integration:
\begin{equation}\label{eqn:kn}
\tilde{\kappa} (x,y) = \frac{1}{2 \pi \mathbf{i}} \sum_{j=1}^{ N} \omega_j \frac{ \kappa(x,z_j) }{y - z_j}  \equiv \sum_{j=1}^{N}  \kappa(x, z_j) \phi_j(z_j,y),\quad\text{with}\quad \phi_j(z,y) = \frac{\omega_j}{2 \pi \mathbf{i} (y - z)}.
\end{equation}

Clearly, $\tilde{\kappa} (x,y)$ in \cref{eqn:kn} is a degenerate approximation  to $\kappa(x,y)$ like \cref{eqn:degenerate}. Moreover, it has one
additional property of {\it structure preservation}: the function $\varphi_j(x)$ in this case is $\kappa(x,z_j)$, which is exactly the
original kernel $\kappa(x,y)$ with $z_j$ in the role of $y$. This gives a simple and intuitive explanation of the use of proxy points: the
interaction between $x$ and $y$ can essentially be approximated by the interaction between $x$ and some proxy points $Z$ (and later we
will further see that $Z$ can be independent of the number of $x$ and $y$ points). These two interactions are made equivalent
(in terms of computing potential) through the use
of the function $\phi_j$. In another word, equivalent charges can be placed on the proxy surface. A pictorial illustration is shown in \cref{fig:2b}.

\subsection{Approximation error analysis}
\label{subsec:error}

Although the approximation \cref{eqn:kn} holds for any proxy surface $\Gamma$ satisfying the given conditions and for any quadrature rule, we still need to make specific choices in order to obtain a more practical error bound. Firstly, we assume the proxy surface to be a circle: $\Gamma = \mathcal{C} (0;\gamma) $, which is on of the most popular choices in related work and is also consistent with our assumptions at the beginning of \cref{sec:compression}. For now, the proxy surface $\Gamma$ is only assumed to be between $X$ and $Y$, i.e., $\gamma_1 < \gamma < \gamma_2$ as in \cref{fig:1}, and we will come back to discuss more on this later. Secondly, the quadrature rule is chosen to be the composite trapezoidal rule with
\begin{equation}\label{eqn:trapezoidal}
z_j =  \gamma \exp\left(\frac{2 j\pi \mathbf{i} }{N} \right),\quad \omega_j = \frac{2 \pi \mathbf{i} }{N}z_j, \quad j = 1,2,\dots,N.
\end{equation}
This choice can be justified by noting that the trapezoidal rule converges exponentially fast if applied to a periodic integrand \cite{trefethen2014exponentially}. Our results later also align with this. Moreover, if no specific direction is more important that others, the trapezoidal rule performs uniformly well on all directions of the complex plane $\mathbb{C}$. Some related discussions of this issue can be found in \cite{kestyn2016feast,ye2017fast}.

As a result of the above assumptions, the function $\phi_j(z,y)$ in \cref{eqn:kn} becomes the following form:
\begin{equation*}
\phi(z,y) = \frac{1}{N} \frac{z}{y-z},\quad y \neq z,
\end{equation*}
where we dropped the subscript $j$ since $j$ does not explicitly appear on the right-hand side. Also, we define
\[g(z) = \frac1{z-1},\quad z\ne1.\]

The following lemma will be used in the analysis of the approximation error for $\kappa(x,y)$.

\begin{lemma}\label{lem:sum}
Let $\{ z_j \}_{j=1}^N$ be the points defined in \cref{eqn:trapezoidal}. Then the following result holds for all $z \in \mathbb{C} \backslash \{ z_j \}_{j=1}^N$:
\begin{equation}\label{eqn:sum}
\sum_{j=1}^{N} \frac{z_j}{z - z_j} =Ng\!\left(\Big(\frac{z}{\gamma}\Big)^N\right).
\end{equation}
\end{lemma}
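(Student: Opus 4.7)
The plan is to recognize that the points $z_j = \gamma\exp(2j\pi\mathbf{i}/N)$ are exactly the $N$ distinct roots of the polynomial $w^N - \gamma^N$, since $z_j^N = \gamma^N \exp(2j\pi\mathbf{i}) = \gamma^N$ and the arguments $2j\pi/N$ for $j=1,\dots,N$ are distinct modulo $2\pi$. Hence over $\mathbb{C}$ we have the factorization $w^N - \gamma^N = \prod_{j=1}^N (w - z_j)$, and everything reduces to manipulating this identity.

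First, I would take the logarithmic derivative of both sides with respect to $w$. This immediately yields $\sum_{j=1}^N \frac{1}{w - z_j} = \frac{N w^{N-1}}{w^N - \gamma^N}$, valid for every $w \in \mathbb{C}\setminus\{z_j\}_{j=1}^N$. Next, I would apply the elementary rearrangement $\frac{z_j}{z - z_j} = -1 + \frac{z}{z - z_j}$ term by term, so that $\sum_{j=1}^N \frac{z_j}{z - z_j} = -N + z\sum_{j=1}^N \frac{1}{z - z_j}$. Substituting the logarithmic derivative identity and combining fractions, one gets $-N + \frac{N z^N}{z^N - \gamma^N} = \frac{N\gamma^N}{z^N - \gamma^N} = \frac{N}{(z/\gamma)^N - 1}$, which is precisely $N g((z/\gamma)^N)$ by the definition of $g$ given just above the lemma.

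There is no genuine obstacle in this proof; it is essentially mechanical once one spots the factorization. An alternative route, which I would mention only if a more algebraic flavor is desired, is a direct partial fraction expansion of $1/(w^N - \gamma^N)$ at its $N$ simple poles $z_j$: the residue at $z_j$ equals $1/(N z_j^{N-1}) = z_j/(N\gamma^N)$ (using $z_j^N = \gamma^N$), so $\frac{1}{w^N - \gamma^N} = \frac{1}{N\gamma^N}\sum_{j=1}^N \frac{z_j}{w - z_j}$, and solving for the sum gives the same conclusion. The only care needed in writing the proof is to state the exclusion $z \notin \{z_j\}_{j=1}^N$ so the right-hand side of the logarithmic derivative identity is well defined, which matches the hypothesis of the lemma.
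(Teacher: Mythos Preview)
Your proof is correct and takes a genuinely different route from the paper. The paper proceeds by expanding each summand as a geometric series, treating the cases $|z|<\gamma$ and $|z|>\gamma$ separately, invoking the identity $\sum_{j=1}^N z_j^{\,p}=N\gamma^p$ for $p$ a multiple of $N$ and $0$ otherwise, and then appealing to analytic continuation to cover $|z|=\gamma$. Your argument instead recognizes the $z_j$ as the roots of $w^N-\gamma^N$, takes the logarithmic derivative of the factorization, and reaches the result after a one-line algebraic rearrangement. The advantage of your approach is that it is uniform in $z$: no case split and no continuity patch are needed, and the identity is obtained as an exact rational-function equality valid everywhere away from the poles. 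The paper's approach is perhaps more self-contained in that it uses only geometric series and the roots-of-unity power-sum identity, but at the cost of extra bookkeeping. Your alternative via partial fractions of $1/(w^N-\gamma^N)$ is also valid and equally direct.
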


\begin{proof}
For any integer $p$, we have
\begin{equation}\label{eqn:sum_zj}
\sum_{j=1}^{N}  z_j^p = \begin{cases}
N \gamma^p, \quad &\text{if $p$ is a multiple of $N$}, \\
0, \quad &\text{otherwise}.
\end{cases}
\end{equation}
If $|z| < \gamma $, then $|z/z_j| < 1$ for $j = 1,2,\dots,N$ and
\begin{align*}
\sum_{j=1}^{N} \frac{z_j}{z - z_j} & = -  \sum_{j=1}^{N} \frac{1}{1 - z/z_j} = - \sum_{j=1}^{N}  \sum_{k=0}^{\infty} \left( \frac{z}{z_j} \right)^k
 = -  \sum_{k=0}^{\infty} \Big( z^k \sum_{j=1}^{N} z_j^{-k}\Big) \\
& = -  \sum_{l=0}^{\infty} z^{lN} N \gamma^{-lN}  \quad \text{(with \cref{eqn:sum_zj}, only $k=lN$ terms left)} \\
& = - \frac{N}{ 1 - z^N/\gamma^N} = Ng\!\left(\Big(\frac{z}{\gamma}\Big)^N\right).
\end{align*}
If $|z| > \gamma$, we can similarly prove the result using $|z_j / z| <1 $.
Finally, since both sides of \cref{eqn:sum} are analytic functions on $\mathbb{C} \backslash  \{ z_j \}_{j=1}^N$
and they agree on $z$ with $|z| \neq \gamma$, by continuity, they must also agree on $z$ with
$|z| = \gamma,\;z \notin  \{ z_j \}_{j=1}^N$. This completes the proof.
\end{proof}

In the following theorem, we derive an analytical expression for the accuracy of approximating $\kappa(x,y)$ by $\tilde{\kappa} (x,y)$.
Without loss of generality, assume $x\ne0$.

\begin{theorem}\label{prop:rel_err}
Suppose $\kappa(x,y)$ in \cref{eqn:kernel} is approximated by $\tilde{\kappa} (x,y)$ in \cref{eqn:kn} which is obtained from the composite trapezoidal rule with \cref{eqn:trapezoidal}. Assume $x\ne0$. Then
\begin{equation}\label{eqn:kn_analytic}
\tilde{\kappa}(x,y) =
 \kappa(x,y)   \left(  1+  \varepsilon(x,y) \right),
\end{equation}
where $\varepsilon(x,y)$ is the relative approximation error
\begin{equation}\label{eqn:rel_err}
\varepsilon(x,y) := \frac{\tilde{\kappa}(x,y) - \kappa(x,y)}{\kappa(x,y)} =  g\left(\Big(\frac{y}{\gamma}\Big)^N\right)  + \sum_{j=0}^{d-1}  \frac{(y-x)^j}{j!} \frac{ \mathrm{d}^j  }{ \mathrm{d} x^j }  g\left(\Big(\frac{\gamma}{x}\Big)^N\right)   .
\end{equation}
\end{theorem}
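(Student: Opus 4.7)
The plan is to compute $\tilde\kappa(x,y)$ in closed form using a partial-fraction expansion of the summand together with \cref{lem:sum}, and then to reshape the result into the form of \cref{eqn:rel_err}. First I would substitute the trapezoidal weights from \cref{eqn:trapezoidal} into \cref{eqn:kn} to write
\[
\tilde\kappa(x,y) = \frac{1}{N}\sum_{j=1}^N \frac{z_j}{(x-z_j)^d(y-z_j)}.
\]
Next I would split the summand through the algebraic identity
\[
\frac{1}{(x-z)^d(y-z)} = \frac{1}{(x-y)^d(y-z)} + \sum_{l=1}^d \frac{(-1)^{d-l}}{(y-x)^{d-l+1}(x-z)^l},
\]
which follows from the finite geometric expansion $\tfrac{1}{y-z} = \sum_{k=0}^{d-1}\tfrac{(z-x)^k}{(y-x)^{k+1}} + \tfrac{(z-x)^d}{(y-x)^d(y-z)}$ divided by $(x-z)^d$. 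Setting $z=z_j$, multiplying by $z_j$, summing over $j$, and dividing by $N$, the first piece contributes $\kappa(x,y)\,g\bigl((y/\gamma)^N\bigr)$ to $\tilde\kappa(x,y)$ by \cref{lem:sum}.

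For the remaining pieces I would differentiate the identity of \cref{lem:sum} $(l-1)$ times in $x$ to get
\[
\sum_{j=1}^N \frac{z_j}{(x-z_j)^l} = \frac{(-1)^{l-1}N}{(l-1)!}\,\frac{\mathrm{d}^{l-1}}{\mathrm{d} x^{l-1}}\,g\bigl((x/\gamma)^N\bigr),\qquad l=1,\dots,d,
\]
and feed these into the partial-fraction sum. After dividing through by $\kappa(x,y)=1/(x-y)^d$, the combined sign $(-1)^{d-l}\,(-1)^{l-1}\,(-1)^d = -1$ collapses, the residual power of $(y-x)$ reindexes to $(y-x)^{l-1}$, and writing $k=l-1$ the ratio simplifies to
\[
\frac{\tilde\kappa(x,y)}{\kappa(x,y)} - 1 = g\bigl((y/\gamma)^N\bigr) - 1 - \sum_{k=0}^{d-1}\frac{(y-x)^k}{k!}\,\frac{\mathrm{d}^k}{\mathrm{d} x^k}\,g\bigl((x/\gamma)^N\bigr).
\]

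The last step converts the argument $(x/\gamma)^N$ into $(\gamma/x)^N$, which is what makes each $g$ term genuinely small for large $N$ (since $|\gamma/x|>1$ and $|\gamma/y|<1$). From $g(w)=1/(w-1)$ one immediately checks the elementary involution $g(w)+g(1/w)=-1$, so $g\bigl((x/\gamma)^N\bigr) = -1 - g\bigl((\gamma/x)^N\bigr)$. Because only the $k=0$ term sees the constant $-1$, substituting this into the sum above cancels the stray $-1$ and produces exactly \cref{eqn:rel_err}. I expect the main technical obstacle to be the careful bookkeeping of the sign factors through the partial-fraction expansion, the Leibniz-type differentiation, and the division by $\kappa(x,y)$; the decisive conceptual observation is the tiny involution $g(w)+g(1/w)=-1$, which bridges the natural algebraic output of \cref{lem:sum} and the decaying form in which the theorem is stated.
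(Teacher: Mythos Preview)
Your argument is correct and complete; the partial-fraction identity, the differentiated form of \cref{lem:sum}, the sign bookkeeping, and the involution $g(w)+g(1/w)=-1$ all check out and assemble into \cref{eqn:rel_err} exactly as you describe.

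Your route differs from the paper's. The paper proves the $d=1$ case directly (essentially your computation specialized to $d=1$, including the same involution step), and then proceeds by induction on $d$: assuming \cref{eqn:kn_analytic} for $d=k$, it differentiates both sides with respect to $x$, uses $\frac{\mathrm{d}}{\mathrm{d}x}\kappa_d(x,z)=-k\,\kappa_{d+1}(x,z)$ on the left, and observes on the right a telescoping cancellation that leaves a single new term and advances the sum to $j\le k$. Your approach instead performs a one-shot partial-fraction expansion of $\tfrac{1}{(x-z)^d(y-z)}$ and applies the differentiated \cref{lem:sum} to each piece. The two are closely related---both ultimately generate the higher powers $(x-z_j)^{-l}$ by differentiating \cref{lem:sum}---but organized differently: the paper's induction hides the sign tracking inside a clean telescoping step, while your direct decomposition handles all $d$ at once at the cost of the explicit sign chase you flagged. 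Either way the same involution identity is the key to converting $g\bigl((x/\gamma)^N\bigr)$ into the decaying form $g\bigl((\gamma/x)^N\bigr)$.
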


\begin{proof}
We prove this theorem by induction on $d$. For $d = 1$, substituting \cref{eqn:trapezoidal} into \cref{eqn:kn} yields
\begin{align*}
\tilde{\kappa}  (x, y)  & = \frac{1}{N}  \sum_{j=1}^{N} \frac{z_j}{ (x-z_j)(y - z_j) }  = \frac{1}{N(x-y)} \sum_{j=1}^{N} \frac{(x-z_j)-(y-z_j)}{ (x-z_j)(y - z_j) } z_j\\
& = \frac{1}{N(x-y)} \left(   \sum_{j=1}^{N} \frac{z_j}{y - z_j}  -  \sum_{j=1}^{N} \frac{z_j}{x - z_j} \right) \\
&  = \frac{1}{N(x-y)}  \left(  Ng\left(\Big(\frac{y}{\gamma}\Big)^N\right) - Ng\left(\Big(\frac{x}{\gamma}\Big)^N\right)  \right) \quad \text{(\cref{lem:sum})}  \\
& = \frac{1}{x-y} \left[  1+g\left(\Big(\frac{y}{\gamma}\Big)^N\right) +g\left(\Big(\frac{\gamma}{x}\Big)^N\right)\right].
\end{align*}
Thus, \cref{eqn:kn_analytic} holds for $d = 1$.

Now suppose \cref{eqn:kn_analytic} holds for $d = k$ with $k$ a positive integer. Equating \cref{eqn:kn} and \cref{eqn:kn_analytic} (with $d=k$) and plugging in $\kappa(x,y)$ to get
\begin{equation*}
    \sum_{j=1}^{N} \frac{ \phi_j (z_j,y) }{(x-z_j)^k}   = \frac{1}{(x-y)^k}   \left[  1+  g\left(\Big(\frac{y}{\gamma}\Big)^N\right)  + \sum_{j=0}^{k-1}  \frac{(y-x)^j}{j!} \frac{ \mathrm{d}^j  }{ \mathrm{d} x^j }  g\left(\Big(\frac{\gamma}{x}\Big)^N\right) \right].
\end{equation*}
The derivatives of the left and right-hand sides with respect to $x$ are, respectively,
$- k \sum_{j=1}^{N}  \frac{ \phi_j (z_j,y)}{(x-z_j)^{k+1}}  $
and
\begin{align*}
   &\frac{-k}{(x-y)^{k+1}}   \left[  1+  g\left(\Big(\frac{y}{\gamma}\Big)^N\right)  + \sum_{j=0}^{k-1}  \frac{(y-x)^j}{j!} \frac{ \mathrm{d}^j  }{ \mathrm{d} x^j }  g \left(\Big(\frac{\gamma}{x}\Big)^N\right) \right]  \\
    & \hspace{5pt}+ \frac{1}{(x-y)^{k}}   \left[  \sum_{j=0}^{k-1}  \frac{(y-x)^j}{j!} \frac{ \mathrm{d}^{j+1}  }{ \mathrm{d} x^{j+1} }   g \left(\Big(\frac{\gamma}{x}\Big)^N\right)  -  \sum_{j=1}^{k-1}  \frac{(y-x)^{j-1}}{(j-1)!} \frac{ \mathrm{d}^{j}  }{ \mathrm{d} x^{j} } g \left(\Big(\frac{\gamma}{x}\Big)^N\right) \right] \\
    & \hspace{-5pt}  = \frac{-k}{(x-y)^{k+1}}   \left[  1+  g \left(  \Big(\frac{y}{\gamma}\Big)^N  \right)  + \sum_{j=0}^{k-1}  \frac{(y-x)^j}{j!} \frac{ \mathrm{d}^j  }{ \mathrm{d} x^j }  g \left( \Big(\frac{\gamma}{x}\Big)^N   \right) \right]   \\
    & \hspace{5pt} + \frac{1}{(x-y)^{k}}   \frac{  (y-x)^{k-1}}{ (k-1)! } \frac{ \mathrm{d}^k  }{ \mathrm{d} x^k }  g \left( \Big(\frac{\gamma}{x}\Big)^N  \right) \quad \text{(all terms cancel except for $j=k-1$)} \\
     &  \hspace{-5pt} = \frac{-k}{(x-y)^{k+1}}   \left[  1+  g \left(  \Big(\frac{y}{\gamma}\Big)^N  \right)  + \sum_{j=0}^{k}  \frac{(y-x)^j}{j!} \frac{ \mathrm{d}^j  }{ \mathrm{d} x^j }  g \left( \Big(\frac{\gamma}{x}\Big)^N   \right) \right].
\end{align*}
Thus,
\begin{equation*}
     \sum_{j=1}^{N}  \frac{ \phi (z_j,y) }{(x-z_j)^{k+1}}  = \frac{1}{(x-y)^{k+1}}   \left[  1+  g \left(  \Big(\frac{y}{\gamma}\Big)^N  \right)  + \sum_{j=0}^{k}  \frac{(y-x)^j}{j!} \frac{ \mathrm{d}^j  }{ \mathrm{d} x^j }  g \left( \Big(\frac{\gamma}{x}\Big)^N   \right) \right].
\end{equation*}
That is, \cref{eqn:kn_analytic} holds for $d = k+1$.
By induction, \cref{eqn:kn_analytic}--\cref{eqn:rel_err} are true for any positive integer $d$.
\end{proof}

With the analytical expression \cref{eqn:rel_err} we can give a rigorous upper bound for the approximation error.

\begin{theorem}\label{thm:rel_err_bound}
Suppose $0<|x|<\gamma_1<\gamma<|y|$. With all the assumptions in \cref{prop:rel_err}, there exists a positive integer $N_1$ such that for any $N > N_1$, the approximation error \cref{eqn:rel_err} is bounded by
\begin{equation}\label{eqn:rel_err_bound}
| \varepsilon (x,y ) |  \leq   g\left( \Big|\frac{y}{\gamma}\Big|^N  \right) +  c \, g \!\left( \Big|\frac{\gamma}{x}\Big|^N \right),
\end{equation}
where $c=1$ if $d=1$, and otherwise,
\begin{equation}\label{eq:c}
c = 2+2 \sum_{j=1}^{d-1}  \frac{ [( |y/x|  +1  )N]^j (2d)^{j-1}}{ j!}.
\end{equation}
\end{theorem}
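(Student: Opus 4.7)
\emph{Proof plan.} I would start from the closed-form expression for $\varepsilon(x,y)$ provided by \cref{prop:rel_err} and bound its two summands separately. The first summand, $g((y/\gamma)^N)$, is handled at once: since $|y|>\gamma$ gives $|(y/\gamma)^N|>1$, the reverse triangle inequality $|w-1|\ge|w|-1$ applied to $w=(y/\gamma)^N$ yields $|g((y/\gamma)^N)|\le g(|y/\gamma|^N)$, which is the first term of \cref{eqn:rel_err_bound}.

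The substance of the proof is bounding
\[ T(x,y):=\sum_{j=0}^{d-1}\frac{(y-x)^j}{j!}\frac{\mathrm{d}^j}{\mathrm{d} x^j}g\!\left(\Big(\frac{\gamma}{x}\Big)^{\!N}\right). \]
Because $|x|<\gamma_1<\gamma$, the globally convergent series $g((\gamma/x)^N)=\sum_{k\ge 1}(x/\gamma)^{Nk}$ is available. Termwise differentiation (legal by absolute uniform convergence on compact subsets of $\{|x|<\gamma\}$) and a swap of the finite $j$-sum with the $k$-sum collapse the expression to
\[ T(x,y)=\sum_{k\ge 1}\Big(\frac{x}{\gamma}\Big)^{\!Nk}\sum_{j=0}^{d-1}\binom{Nk}{j}\Big(\frac{y-x}{x}\Big)^{\!j}. \]
Taking absolute values, using $\binom{Nk}{j}\le(Nk)^j/j!$ and $|(y-x)/x|\le|y/x|+1$, and writing $q:=(|x|/\gamma)^N$ and $\lambda:=(|y/x|+1)N$, this produces
\[ |T(x,y)|\le\sum_{j=0}^{d-1}\frac{\lambda^j}{j!}\,S_j(q),\qquad S_j(q):=\sum_{k\ge 1}k^j q^k. \]

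The remaining step is to control each $S_j(q)$ in terms of $g(|\gamma/x|^N)=q/(1-q)$. Using the standard rational representation $S_j(q)=q\,P_j(q)/(1-q)^{j+1}$, where $P_j$ has nonnegative coefficients and $P_j(1)=j!$, I get $S_j(q)/g\le j!\,(1-q)^{-j}$. I would pick $N_1$ to be the smallest integer with $(\gamma_1/\gamma)^{N_1}\le1/(2d)$, which is possible since $\gamma_1<\gamma$; then for $N>N_1$ one has $q\le1/(2d)\le1/2$, so $(1-q)^{-j}\le 2^j$, and combining with the elementary inequality $j!\le d^{j-1}$ for $1\le j\le d-1$ (easy induction via $(j+1)!\le(j+1)\,d^{j-1}\le d^j$, using $j+1\le d$) upgrades this to $S_j(q)\le 2(2d)^{j-1}g$ for every $j\ge1$, while $S_0(q)=g$ directly. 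Substituting these estimates back, and relaxing the leading $1\cdot g$ to $2\cdot g$ so that every coefficient carries the common factor $2$, yields $|T(x,y)|\le c\,g(|\gamma/x|^N)$ with $c$ exactly as in \cref{eq:c}. The degenerate case $d=1$ is handled separately: only the $j=0$ term of $T$ survives, so $|T|\le g$ and $c=1$ with no smallness constraint on $N$.

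The main obstacle is assembling $c$ in the precise form \cref{eq:c} rather than some looser polynomial in $N$: the factor $(2d)^{j-1}$ captures exactly the trade-off between $j!$ and $(1-q)^{-j}$ that is enabled by forcing $q\le 1/(2d)$ through the choice of $N_1$. The secondary points (termwise differentiation, Fubini-type interchange of the $j$- and $k$-sums) are routine since all series involved converge absolutely and uniformly on compact subsets of $\{|x|<\gamma\}$.
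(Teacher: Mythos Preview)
Your argument is correct and reaches the exact constant \cref{eq:c}, but it proceeds by a genuinely different route from the paper. The paper never expands $g((\gamma/x)^N)$ as a power series; instead it shows by induction that
\[
\frac{\mathrm{d}^j}{\mathrm{d} x^j}\, g\!\left(\Big(\frac{\gamma}{x}\Big)^{N}\right)=\frac{1}{x^{j}}\sum_{i=1}^{j+1}\alpha_i^{(j)}\,g^{i}\!\left(\Big(\frac{\gamma}{x}\Big)^{N}\right),
\]
derives a recursion for the coefficients $\alpha_i^{(j)}$, and bounds $|\alpha_i^{(j)}|\le (2d)^{j-1}N^{j}$ recursively; the geometric sum $\sum_i g^i\le 2g$ then finishes. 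Your approach replaces this coefficient bookkeeping with the Eulerian-type identity $S_j(q)=qP_j(q)/(1-q)^{j+1}$, $P_j(1)=j!$, together with the elementary bound $j!\le d^{\,j-1}$ for $1\le j\le d-1$, which is arguably cleaner and makes it transparent why the precise combination $(2d)^{j-1}$ appears. Two small remarks: first, your threshold $q\le 1/(2d)$ is stronger than needed---only $q\le 1/2$ is used in $(1-q)^{-j}\le 2^{j}$, so the $(2d)^{j-1}$ really comes from $2^{j}\cdot j!\le 2(2d)^{j-1}$ and not from the sharper threshold; second, your $N_1$ depends on $\gamma$ through $(\gamma_1/\gamma)^{N_1}$, whereas the paper's choice $N_1=\max\{d,\lceil\log 3/\log(\gamma_1/|x|)\rceil\}$ is $\gamma$-free (at the cost of depending on $x$). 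This distinction is immaterial for \cref{thm:rel_err_bound} itself but becomes relevant in \cref{thm:optimal_gamma}, where $\gamma$ is varied; there you would want to rebase your smallness condition on $(|x|/\gamma_1)^{N}$ rather than $(\gamma_1/\gamma)^{N}$.
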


\begin{proof}
    For any positive integer $N$,
    \begin{equation*}
       \left| g\! \left(  \Big(\frac{y}{\gamma}\Big)^N  \right)  \right|  = \frac{1}{ |  (y /\gamma)^N  -1| } \leq \frac{1}{ |y /\gamma|^N -1  } = g\!\left( \Big|\frac{y}{\gamma}\Big|^N \right).
    \end{equation*}
    Thus, we only need to prove the following bound:
    \begin{equation}\label{eqn:rel_err_bound_proof_1}
        \left|  \sum_{j=0}^{d-1}  \frac{(y-x)^j}{j!} \frac{ \mathrm{d}^j  }{ \mathrm{d} x^j }  g \!\left( \Big(\frac{\gamma}{x}\Big)^N   \right) \right|    \leq c\, g\! \left( \Big|\frac{\gamma}{x}\Big|^N \right).
    \end{equation}
    When $d = 1$, it's easy to verify that the above inequality holds for $c = 1$ and any positive integer $N$. We now consider the case when $d \geq 2$.

    It can be verified that, for any positive integer $i$,
    \begin{equation}\label{eqn:rel_err_bound_proof_2}
        \frac{ \mathrm{d} }{ \mathrm{d} x } g^i \left( \Big(\frac{\gamma}{x}\Big)^N   \right) =  \frac{i N}{ x} \left[ g^i \left( \Big(\frac{\gamma}{x}\Big)^N   \right) + g^{i+1} \left( \Big(\frac{\gamma}{x}\Big)^N   \right)     \right],
    \end{equation}
    where $g^i$ denotes function $g$ raised to power $i$. Hence, the derivatives appearing in \cref{eqn:rel_err_bound_proof_1} all have the following form:
    \begin{equation}\label{eqn:rel_err_bound_proof_3}
        \frac{ \mathrm{d}^j  }{ \mathrm{d} x^j }  g \left( \Big(\frac{\gamma}{x}\Big)^N   \right)  =  \frac{1}{ x^j}  \sum_{i = 1}^{j+1} \alpha_i^{(j)} g^i \left( \Big(\frac{\gamma}{x}\Big)^N   \right),
    \end{equation}
    where $\alpha_i^{(j)}$ ($ 1 \leq i \leq j+1$, $ 0 \leq  j \leq d-1 $) are constants.

    We claim that, when $N > d$ and for any $0 \leq j \leq d-1$, there exit constants $\beta^{(j)}$ dependent on $d$ so that
     \[ | \alpha_i^{(j)} | \leq  \beta^{(j)} N^j,\quad 1 \leq i \leq j+1.\]
    This claim can be proved by induction on $j$. It is obviously true when $j=0$, and $\beta^{(0)}=1$ in this case. When $j = 1$, \cref{eqn:rel_err_bound_proof_2} means that
    the claim is true with $\alpha_1^{(1)} = \alpha_2^{(1)} = N$ and $\beta^{(1)} = 1$. Suppose the claim holds for $ j = k$ with $1 \leq k \leq d-2$ (where we also assume $d > 2$, since otherwise the claim is already proved). Then
\begin{align*}
&  \frac{\mathrm{d}^{k+1}}{\mathrm{d}x^{k+1}}g\left(  \Big(\frac{\gamma}{x}\Big)^{N}\right)  =\frac{\mathrm{d}}{\mathrm{d}x}
\left(  \frac{1}{x^{k}}\sum_{i=1}^{k+1}\alpha_{i}^{(k)}g^{i}\left(  \Big(\frac{\gamma}{x}\Big)^{N}\right)  \right)  \\
=\, &  -\frac{k}{x^{k+1}}\sum_{i=1}^{k+1}\alpha_{i}^{(k)}g^{i}\left(
\Big(\frac{\gamma}{x}\Big)^{N}\right)  +\frac{1}{x^{k}}\sum_{i=1}^{k+1}\alpha_{i}^{(k)}\frac{iN}{x}\left[  g^{i}
\left(  \Big(\frac{\gamma}{x}\Big)^{N}\right)  +g^{i+1}\left(  \Big(\frac{\gamma}{x}\Big)^{N}\right)\right]  \\
&  \omit\hfill(\text{by \cref{eqn:rel_err_bound_proof_2}})\\
=\, &  \frac{1}{x^{k+1}}\bigg[(N-k)\alpha_{1}^{(k)}g\left(  \Big(\frac{\gamma}{x}\Big)^{N}\right)  +\sum_{i=2}^{k+1}
\left(  (iN-k)\alpha_{i}^{(k)}+N(i-1)\alpha_{j-1}^{(k)}\right)  g^{i}\left(  \Big(\frac{\gamma}{x}\Big)^{N}\right)  \\
&  \hspace{40pt}+N(k+1)\alpha_{k+1}^{(k)}g^{k+2}\left(  \Big(\frac{\gamma}{x}\Big)^{N}\right)  \bigg].
\end{align*}
    Thus, the coefficients satisfy the following recurrence relation
    \begin{equation*}
        \alpha_i^{(k+1)} = \begin{cases}
            (N -k) \alpha_1^{(k)},\quad  & i = 1, \\
            (iN-k) \alpha_i^{(k)} +  N(i-1)\alpha_{i-1}^{(k)} , \quad & 2 \leq i \leq k+1, \\
            N(k+1) \alpha_{k+1}^{(k)} , \quad & i = k+2.
        \end{cases}
    \end{equation*}
    Therefore, when $N> d$, we can pick (conservatively)
    \begin{equation}\label{eq:crecur}
    \beta^{(k+1)} =  2d \beta^{(k)},
    \end{equation}
    so that $| \alpha_i^{(k+1)} | \leq \beta^{(k+1)} N^{k+1} $. That is, the claim holds for $j = k+1$ and this finishes the induction.

    Now, we go back to prove \cref{eqn:rel_err_bound_proof_1}. By \cref{eqn:rel_err_bound_proof_3},
    \begin{align}\label{eqn:rel_err_bound_proof_4}
         & \left|  \sum_{j=0}^{d-1}  \frac{(y-x)^j}{j!}  \frac{ \mathrm{d}^j  }{ \mathrm{d} x^j }  g \left( \Big(\frac{\gamma}{x}\Big)^N   \right)  \right|  =  \left|  \sum_{j=0}^{d-1} \left[ \frac{(y-x)^j}{j!}   \frac{1}{ x^j}  \sum_{i = 1}^{j+1} \alpha_i^{(j)} g^i \left( \Big(\frac{\gamma}{x}\Big)^N   \right) \right]   \right| \\
        \leq &\sum_{j = 0}^{d - 1} \left[\frac{ ( |y/x|  +1  )^j }{ j!} \sum_{i=1}^{j+1 }  |\alpha_i^{(j)}|\,g^i\! \left(  \Big|\frac{\gamma}{x}\Big|^N   \right)\right]\nonumber
        \leq \sum_{j = 0}^{d - 1} \left[\frac{ ( |y/x|  +1  )^j }{ j!} \beta^{(j)} N^j \sum_{i=1}^{j+1 }  g^i\! \left(  \Big|\frac{\gamma}{x}\Big|^N   \right)\right].\nonumber
    \end{align}
    Set
    \begin{equation}\label{eq:ntilde}
    N_1 = \max\{d, \lceil\log 3/ \log |\gamma_1/x|\rceil \}.
    \end{equation}
    Then for $N > N_1$, $|\gamma/ x|^N>|\gamma_1/x|^N > 3$ and $g \left(  |\gamma /x |^N   \right)  < 1/2$. Thus, for $1\le j\le d-1$,
    \begin{equation*}
        \sum_{i=1}^{j+1 }  g^i \left(  \Big|\frac{\gamma}{x}\Big|^N   \right)  \leq  2 g \left( \Big|\frac{\gamma}{x}\Big|^N \right).
    \end{equation*}
    Continuing on \cref{eqn:rel_err_bound_proof_4}, for $N > N_1$, we get
    \begin{equation}\label{eqn:rel_err_bound_proof_c}
        \left|  \sum_{j=0}^{d-1}  \frac{(y-x)^j}{j!} \frac{ \mathrm{d}^j  }{ \mathrm{d} x^j }  g \left( \Big(\frac{\gamma}{x}\Big)^N   \right)    \right| \leq  c g \left( \Big|\frac{\gamma}{x}\Big|^N \right),\quad\text{with}\quad  c  = 2 \sum_{j=0}^{d-1}  \frac{ ( |y/x|  +1  )^j }{ j!}  \beta^{(j)} N^j.
    \end{equation}
Note that with the way $\beta^{(j)}$ is picked as in \cref{eq:crecur}, $\beta^{(j)}$ satisfies
\[\beta^{(j)}=(2d)^{j-1} \beta^{(1)}=(2d)^{j-1},\ j=1,2,\ldots,d-1.\]
Then $c$ in \cref{eqn:rel_err_bound_proof_c} becomes \cref{eq:c}.
Thus, \cref{eqn:rel_err_bound_proof_1} holds with $c$ in \cref{eq:c}.
\end{proof}


The upper bound \cref{eqn:rel_err_bound} in \cref{thm:rel_err_bound}
has two implications.
\begin{itemize}
\item Since $g(|y/\gamma|^N)$ and $g(|\gamma/x|^N)$ decay almost exponentially with $N$ and $c$ is just a polynomial in $N$, $d$, and $|y/x|$ with degrees up to $d-1$, the bound in \cref{eqn:rel_err_bound} decays roughly exponentially as $N$ increases.
\item The bound can help us identify a nearly optimal radius $\gamma$ of the proxy surface $\Gamma$ so as to minimize the error. This is given in the following theorem.
\end{itemize}

\begin{theorem}\label{thm:optimal_gamma}
Suppose $0<|x|<\gamma_1<|y|$ and $\kappa(x,y)$ in \cref{eqn:kernel} is approximated by $\tilde{\kappa} (x,y)$ in \cref{eqn:kn} with \cref{eqn:trapezoidal}.
If the upper bound in \cref{eqn:rel_err_bound} is viewed as a real function in $\gamma$ on the interval $(|x|,|y|)$, then there exists a positive integer $N_2$ independent of $\gamma$, such that for $N > N_2$,
\begin{enumerate}
    \item this upper bound has a unique minimizer $\gamma^*\in (|x|,|y|)$;
    \item the minimum of this upper bound decays asymptotically as $\mathcal{O} \left( |y/x|^{-N/2}  \right) $.
\end{enumerate}
\end{theorem}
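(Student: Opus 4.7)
The plan is to reduce the minimization to a one-dimensional calculus problem via the substitution $t=\gamma^N$. Writing $a=|x|$, $b=|y|$, and observing that the constant $c$ in \cref{eq:c} depends only on $x,y,N,d$ and not on $\gamma$, the right-hand side of \cref{eqn:rel_err_bound} becomes
\[
U(t)=\frac{t}{b^N-t}+\frac{c\,a^N}{t-a^N},\qquad t\in(a^N,b^N).
\]
Both terms are positive on the interior and diverge at the respective endpoints, so a global minimum of $U$ is attained somewhere in $(a^N,b^N)$.

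For uniqueness, I would differentiate to get
\[
U'(t)=\frac{b^N}{(b^N-t)^2}-\frac{c\,a^N}{(t-a^N)^2}.
\]
Since both $t-a^N$ and $b^N-t$ are positive on the interval, the equation $U'(t)=0$ reduces to the linear relation $\sqrt{b^N}\,(t-a^N)=\sqrt{c\,a^N}\,(b^N-t)$, which has the unique root
\[
t^*=\frac{a^N+s\,b^N}{1+s},\qquad s:=\sqrt{c}\,(a/b)^{N/2}.
\]
Since $s>0$, one checks that $t^*\in(a^N,b^N)$. Because $U$ blows up at both endpoints and has exactly one critical point, $t^*$ must be the global minimizer; pulling back through the strictly increasing map $\gamma\mapsto\gamma^N$ then gives a unique optimal $\gamma^*=(t^*)^{1/N}\in(a,b)$.

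For the asymptotic rate, I would substitute back using $b^N-t^*=(b^N-a^N)/(1+s)$ and $t^*-a^N=s(b^N-a^N)/(1+s)$, together with the identity $c\,a^N/s=s\,b^N=\sqrt{c}\,(ab)^{N/2}$, to get the closed form
\[
U(t^*)=\frac{a^N(1+c)+2\sqrt{c}\,(ab)^{N/2}}{b^N-a^N}.
\]
Because $c$ in \cref{eq:c} is a polynomial in $N$ of degree at most $d-1$, the term $2\sqrt{c}\,(ab)^{N/2}$ dominates $a^N(1+c)$ once $N$ is sufficiently large (the ratio is of order $(b/a)^{N/2}$ up to a polynomial factor), while $b^N-a^N\sim b^N$. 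This yields $U(t^*)=\mathcal{O}((a/b)^{N/2})=\mathcal{O}(|y/x|^{-N/2})$. Taking $N_2$ to be the maximum of $N_1$ from \cref{thm:rel_err_bound} and an explicit threshold ensuring this asymptotic comparison is valid finishes the argument, and both thresholds depend only on $x,y,d$, not on $\gamma$.

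The only real obstacle is bookkeeping: the factor $\sqrt{c}=\mathcal{O}(N^{(d-1)/2})$ in $U(t^*)$ has to be absorbed into the implicit constant of the $\mathcal{O}$-notation, so the stated asymptotic is best read as identifying the dominant exponential rate of decay rather than as a strict pointwise bound. The uniqueness step is by contrast routine once the substitution $t=\gamma^N$ turns the bound into a rational function with an easily solved critical-point equation.
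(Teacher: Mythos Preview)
Your proof is correct and follows essentially the same route as the paper: both make the substitution $t=\gamma^N$, minimize the same rational function, and arrive at the identical closed form $\frac{a^N(1+c)+2\sqrt{c}(ab)^{N/2}}{b^N-a^N}$ for the minimum. The one noteworthy difference is that you handle uniqueness by taking positive square roots on the open interval to reduce $U'(t)=0$ to a linear equation, whereas the paper clears denominators to a quadratic $p(t)$ and checks its sign at the endpoints together with the sign of the leading coefficient (which is where their extra threshold $|y/x|^N>c$ enters); your shortcut is a bit cleaner and shows the minimizer is unique without that extra largeness condition, though $N>N_1$ is still needed for the bound itself.
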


\begin{proof}
    To find the minimizer, we just need to consider the real function
    \begin{equation*}
        h ( t ) =  \frac{1}{ b/t -1 } + \frac{c}{ t /a -1},\quad t \in (a,b),
    \end{equation*}
    where $a = |x|^N$, $b = |y|^N$, and $c$ is either equal to $1$ (for $d=1$) or defined in \cref{eq:c} (for $d\ge 2$). The derivative of the function is
    \begin{equation*}
        h^\prime (t) = \frac{p(t)}{(t-a)^2 (t-b)^2 },\quad\text{with}\quad p(t) =(b-ac) t^2 + 2ab(c-1) t  + ab(a -bc).
    \end{equation*}
    Consider $p(t)$, which is a quadratic polynomial in $t$ with the following properties.
    \begin{itemize}
        \item The coefficient of the second order term is
            \begin{equation*}
                b-ac = |x|^N  \left( |y/x|^N -c   \right).
            \end{equation*}
            Since $c$ is either equal to $1$ (for $d=1$) or a polynomial in $N$, $d$, and $|y/x|$ with degrees up to $d-1$ (for $d\ge 2$),
            there exists $N_2$ larger than $N_1$ in \cref{thm:rel_err_bound} such that $|y/x|^N > c $ for any $N > N_2$. Thus,
            $b-ac > 0$ for $N > N_2$.
        \item The discriminant is $4abc(a-b)^2 >0$.
        \item When evaluated at $t = a$ and $t = b$, the function $p(t)$ gives respectively
\[ p(a) = -ac(a-b)^2 <0,\quad
            p(b) = b(a-b)^2 > 0.\]
    \end{itemize}
    All the properties above combined indicate that $p(t)$ has one root $t_0  \in (a,b)$ and $h^\prime (t)<0$ on $(a,t_0)$
    and $h^\prime (t) > 0$ on $(t_0,b)$.
    Thus, $t_0$ is the only zero of $p(t)$ in $[a,b]$ and $\gamma^* = \sqrt[\leftroot{-3}\uproot{3}N]{t_0}$ is the unique minimizer of the upper bound in \cref{eqn:rel_err_bound}.
The requirements for picking $N_2$ are $N_2 > N_1$ and $|y/x|^{N_2} > c$.
Hence, $N_2$ is independent of $\gamma$.

    To prove the second part of the theorem, we explicitly compute the root $t_0$ of $p(t) = 0$ in $(a,b)$ and
    substitute it into $h(t)$ to get
    \begin{equation*}
        h(t_0) = \frac{ 2 \sqrt{ c b/a } + (c+1) }{ b/a- 1}  = \frac{ 2 \sqrt{c} |y/x|^{N/2} + (c+1)  }{|y/x|^N -1} \sim \mathcal{O} \left( \Big|\frac{y}{x}\Big|^{-N/2}  \right),
    \end{equation*}
    The details involve tedious algebra and are omitted here.
    \end{proof}

In the proof, we can actually find the minimizer but are not
explicitly writing it out. The reason is that the minimizer depends on $x$ and $y$ and
it makes more sense to write a minimizer later when we consider the low-rank approximation of the kernel matrix.
See the next section.

\section{Low-rank approximation accuracy and proxy point selection in the proxy point method for kernel matrices}\label{sec:optimal}

With the kernel $\kappa(x,y)$ in \cref{eqn:kernel} approximated by $\tilde{\kappa}(x,y)$ in \cref{eqn:kn},
a low-rank approximation to $K^{(X,Y)}$ in \cref{eq:k} as follows is obtained:
\begin{equation}\label{eq:lrapprox}
K^{(X,Y)}\approx\tilde{K}^{(X,Y)}:=(\tilde{\kappa}(x,y)_{x\in X,y\in Y})=K^{(X,Z)}\Phi^{(Z,Y)},
\end{equation}
where $\Phi^{(Z,Y)}=(\phi(z,y)_{z\in Z,y\in Y})$.
The analysis in \cref{subsec:error} provides entrywise approximation errors for \cref{eq:lrapprox}
(with implicit dependence on $x$).
Now, we consider normwise approximation errors for $K^{(X,Y)}$ and obtain relative error bounds independent of the specific $x$ and $y$ points.
The error analysis will be further used
to estimate the optimal choice of the radius $\gamma$ for the proxy surface in the low-rank approximation.
We look at the cases $d=1$ and $d\ge2$ separately.

\subsection{The case $d=1$}\label{subsubsec:d1}

In this case, the proof of \cref{prop:rel_err} for $d = 1$ gives an explicit expression for the entrywise approximation error
\begin{equation}\label{eq:ed1}
    \varepsilon(x,y) = g \left(  \Big( \frac{\gamma}{x} \Big)^N  \right)+ g \left(  \Big( \frac{y}{\gamma} \Big)^N  \right).
\end{equation}
We then have the following result on the low-rank approximation error in Frobenius norm.
\begin{proposition}\label{thm:rel_err_bound_d1}
    Suppose $d = 1$ and $\kappa(x,y)$ in \cref{eqn:kernel} is approximated by $\tilde{\kappa} (x,y)$ in \cref{eqn:kn} with \cref{eqn:trapezoidal}. If $0<|x|<\gamma_1<\gamma<\gamma_2<|y|$ for all $x\in X,y\in Y$,
    then for any $N >0$,
    \begin{equation}\label{eqn:rel_err_bound_d1}
        \frac{  \lVert  \tilde{K}^{(X,Y)} - K^{(X,Y)} \rVert_F }{ \lVert K^{(X,Y)} \rVert_F}  \leq g \left(  \Big( \frac{\gamma}{\gamma_1} \Big)^N  \right)+ g \left(  \Big( \frac{\gamma_2}{\gamma} \Big)^N  \right).
    \end{equation}
    Moreover, if the upper bound on the right-hand side is viewed as a function in $\gamma$, it has a unique minimizer $\gamma^* = \sqrt{\gamma_1 \gamma_2}$ and the minimum is $2 g \left(  ( \gamma_2  /  \gamma_1 )^{N/2}  \right) $  which decays asymptotically as $\mathcal{O} \left( |\gamma_2/\gamma_1|^{-N/2}  \right) $.
\end{proposition}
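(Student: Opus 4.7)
The plan is to exploit the fact that for $d=1$ the relative entrywise error has the clean closed form \cref{eq:ed1}, namely $\varepsilon(x,y) = g((\gamma/x)^N) + g((y/\gamma)^N)$, so that the difference $\tilde{\kappa}(x,y)-\kappa(x,y) = \kappa(x,y)\varepsilon(x,y)$ factors multiplicatively. This multiplicative structure turns the Frobenius-norm relative error into an entrywise weighted error, and the hypothesis $|x|<\gamma_1<\gamma<\gamma_2<|y|$ will let me pull the bound outside the summation as a single constant.

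First I would write
\begin{equation*}
\|\tilde{K}^{(X,Y)}-K^{(X,Y)}\|_F^2 = \sum_{x\in X,y\in Y} |\kappa(x,y)|^2\,|\varepsilon(x,y)|^2.
\end{equation*}
Next, using the triangle inequality on \cref{eq:ed1} and the elementary observation $|g(w)|\le g(|w|)$ (already used in the proof of \cref{thm:rel_err_bound}), I would bound $|\varepsilon(x,y)| \le g(|\gamma/x|^N) + g(|y/\gamma|^N)$. Since $g(s)=1/(s-1)$ is strictly decreasing for $s>1$, and since $|x|<\gamma_1$ gives $|\gamma/x|>\gamma/\gamma_1$, while $|y|>\gamma_2$ gives $|y/\gamma|>\gamma_2/\gamma$, monotonicity yields the uniform entrywise bound
\begin{equation*}
|\varepsilon(x,y)| \le g\!\left(\Big(\frac{\gamma}{\gamma_1}\Big)^N\right) + g\!\left(\Big(\frac{\gamma_2}{\gamma}\Big)^N\right)\qquad\text{for all }x\in X,\,y\in Y.
\end{equation*}
Plugging this constant bound back in and dividing by $\|K^{(X,Y)}\|_F$ produces \cref{eqn:rel_err_bound_d1}. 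Note this works for \emph{any} positive $N$ since no smallness assumption on $\gamma/\gamma_1$ or $\gamma_2/\gamma$ is needed beyond $\gamma_1<\gamma<\gamma_2$.

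For the minimizer, I would set $a=\gamma_1^N$, $b=\gamma_2^N$, $t=\gamma^N$ and consider
\begin{equation*}
h(t) = \frac{1}{t/a-1} + \frac{1}{b/t-1} = \frac{a}{t-a} + \frac{t}{b-t},\qquad t\in(a,b).
\end{equation*}
This is exactly the $c=1$ specialization of the function analyzed in the proof of \cref{thm:optimal_gamma}, so I can invoke that reasoning: $h'(t)$ has numerator proportional to $p(t)=(b-a)t^2+ab(a-b)$, which vanishes uniquely at $t_0=\sqrt{ab}$ in $(a,b)$, and $h$ is strictly convex on this interval (or simply $h'<0$ on $(a,t_0)$ and $h'>0$ on $(t_0,b)$). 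Undoing the substitution gives $\gamma^*=(t_0)^{1/N}=\sqrt{\gamma_1\gamma_2}$. Substituting back produces
\begin{equation*}
h(t_0) = \frac{2}{\sqrt{b/a}-1} = 2\,g\!\left(\Big(\frac{\gamma_2}{\gamma_1}\Big)^{N/2}\right),
\end{equation*}
and the asymptotic $\mathcal{O}((\gamma_2/\gamma_1)^{-N/2})$ decay is immediate from $g(s)\sim 1/s$ as $s\to\infty$.

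Nothing here is truly a genuine obstacle, since all the machinery is in place: the main delicate point is the initial reduction, where I must be careful that $\varepsilon(x,y)$ is the \emph{relative} error so that the factor $|\kappa(x,y)|^2$ reassembles into $\|K^{(X,Y)}\|_F^2$ exactly (yielding a clean relative bound without any geometric constants depending on $X$, $Y$, or $|X|,|Y|$). The minimization step is then essentially a one-variable calculus exercise or, alternatively, a direct AM--GM observation applied to $h(t)$.
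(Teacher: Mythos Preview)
Your proposal is correct and follows essentially the same path as the paper: the Frobenius-norm bound is obtained exactly as you describe, by applying the entrywise relative error \cref{eq:ed1} together with $|g(w)|\le g(|w|)$ and the monotonicity of $g$ on $(1,\infty)$. For the minimization step the paper actually offers both options---it explicitly remarks that one may either follow the derivative argument of \cref{thm:optimal_gamma} (which is what you do, specializing to $c=1$) or instead use the algebraic identity
\[
g\!\left((\gamma/\gamma_1)^N\right)+g\!\left((\gamma_2/\gamma)^N\right)
=-1+\frac{(\gamma_2/\gamma_1)^N-1}{(\gamma_2/\gamma_1)^N+1-\bigl((\gamma/\gamma_1)^N+(\gamma_2/\gamma)^N\bigr)},
\]
reducing the problem to minimizing $(\gamma/\gamma_1)^N+(\gamma_2/\gamma)^N$ by AM--GM; the paper presents the latter, but your route is equally valid and yields the same $\gamma^*=\sqrt{\gamma_1\gamma_2}$ and minimum value.
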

\begin{proof}

The approximation error bound \cref{eqn:rel_err_bound_d1} is a direct application of the entrywise error in \cref{eq:ed1} together
with the fact that $g(t)$ monotonically decreases for $t>1$.

To find the minimizer of the right-hand side of \cref{eqn:rel_err_bound_d1}, we can either follow the proof in \cref{thm:optimal_gamma} or
simply use the following explicit expression:
\begin{align*}
     g \left(  ( \gamma  /  \gamma_1 )^N  \right)  +  g \left(  ( \gamma_2 / \gamma )^N  \right)  &= \frac{1}{ ( \gamma  /  \gamma_1 )^N  -1 } + \frac{1}{ ( \gamma_2 / \gamma )^N  -1 } \\
    & = -1 + \frac{(\gamma_2 / \gamma_1)^N  - 1  }{  (\gamma_2 / \gamma_1)^N  + 1  - \left(  ( \gamma  /  \gamma_1 )^N +   ( \gamma_2 / \gamma )^N   \right) }.
\end{align*}
We just need to minimize $( \gamma  /  \gamma_1 )^N +   ( \gamma_2 / \gamma )^N$, which reaches its minimum at $\gamma^* = \sqrt{\gamma_1 \gamma_2}$.
\end{proof}

\begin{remark}\label{rem:error1}
Although it is not easy to choose $\gamma$ to minimize the approximation error directly,
the minimizer $\gamma^*$ for the bound in \cref{eqn:rel_err_bound_d1} can serve as a reasonable estimate of the minimizer for the error.
These can be seen from an intuitive numerical example below.
In addition, the minimum $2 g \left(  ( \gamma_2  /  \gamma_1 )^{N/2}  \right) $ of the bound in \cref{eqn:rel_err_bound_d1} decays nearly exponentially as $N$ increases. Thus, to reach a relative approximation accuracy $\tau$, we can conveniently decide the number of proxy points:
\[N=\mathcal{O} \left( \frac{\log (1/\tau) }{ \log (\gamma_2 /\gamma_1)} \right).\]
Clearly, $N$ does not depend on the number of points or the geometries of $X,Y$.
It only depends on $\tau$ and $\gamma_2 /\gamma_1$ which indicates the separation of $X$ and $Y$.
This is consistent with the conclusions in the FMM context \cite{sun2001matrix}.
\end{remark}

\begin{example}\label{ex1}
We use an example to illustrate the results in \cref{thm:rel_err_bound_d1} for $d=1$.
The points in $X$ and $Y$ are uniformly chosen from their corresponding regions and are plotted in \cref{fig:bound_d1_xyz}, where
$m=|X| = 200$, $n =|Y|= 300$, $\gamma_1 = 0.5$, $\gamma_2 = 2$, and $\gamma_3 = 5$.
\end{example}

First, we fix the number of proxy points $N = 20$ and let $\gamma$ vary. We plot the
actual error $\mathcal{E}_N(\gamma):=\lVert  \tilde{K}^{(X,Y)} - K^{(X,Y)} \rVert_F / \lVert K^{(X,Y)} \rVert_F$ and the error bound in \cref{eqn:rel_err_bound_d1}.
See \cref{fig:bound_d1_gamma}. We can see that both plots are V-shape lines and the error bound is a close estimate
of the actual error. Moreover, the bound nicely captures the error behavior, and the actual error reaches its minimum almost at the same location
where the error bound is minimized: $\gamma^* = \sqrt{\gamma_1 \gamma_2} = 1$. Thus, $\gamma^*$
is a nice choice to minimize the error. The proxy points $Z$ with radius $\gamma^*$ are plotted in \cref{fig:bound_d1_xyz}.

Then in \cref{fig:bound_d1_n}, we fix $\gamma=\gamma^*$ and let $N$ vary. Again,
the error bound provides a nice estimate for the error. Furthermore, both the error and the bound decay exponentially like $\mathcal{O} \left( |\gamma_2/\gamma_1|^{-N/2}  \right) =\mathcal{O} (2^{-N})$.

\begin{figure}[h]
\centering
\subfloat[Sets $X$ and $Y$ with $\gamma_1 = 0.5$, $\gamma_2 = 2$, $\gamma_3 = 5$ and proxy points $Z$ selected with radius $\gamma^*=1$]{\label{fig:bound_d1_xyz}
\quad\includegraphics[height=1.8in]{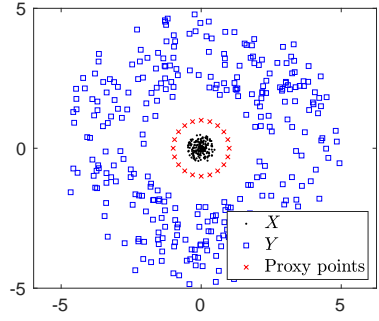}\quad} \\
\subfloat[Varying proxy surface radius $\gamma$]{\label{fig:bound_d1_gamma}\includegraphics[height=1.8in]{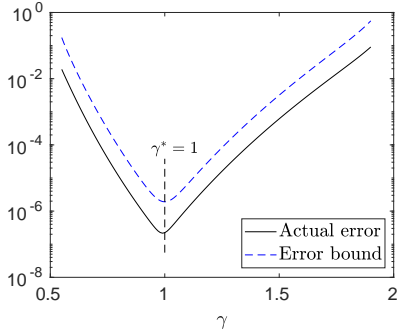}}
\subfloat[Varying number of proxy points $N$]{\label{fig:bound_d1_n}\includegraphics[height=1.8in]{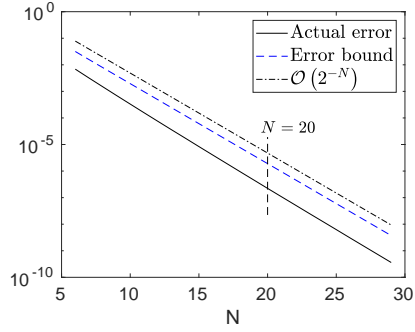}}
\caption{\cref{ex1}: For $d=1$, the selection of the proxy points and the actual relative error $\mathcal{E}_N(\gamma)$ compared with its upper bound in \cref{thm:rel_err_bound_d1} for different $\gamma$ and $N$.}
\label{fig:bound_d1}
\end{figure}

\subsection{The case $d\ge2$}\label{subsubsec:d2}

In this case, there is no simple explicit formula for $\varepsilon(x,y)$ like in \cref{eq:ed1}.
The results in \cref{thm:rel_err_bound,thm:optimal_gamma} cannot be trivially extended to study the normwise error either
since no lower bound is imposed on $|x|$ in $|y/x|$. Nevertheless, we can derive a bound as follows.

\begin{proposition}\label{cor:block_err}
    Suppose $d\ge2$ and $\kappa(x,y)$ in \cref{eqn:kernel} is approximated by $\tilde{\kappa} (x,y)$ in \cref{eqn:kn} with \cref{eqn:trapezoidal}. If $0<|x|<\gamma_1<\gamma<\gamma_2<|y|<\gamma_3$ for all $x\in X,y\in Y$, then there exists a positive integer $N_3$ independent of $\gamma$ such that for $N > N_3$,
    \begin{equation}\label{eqn:block_err_bound}
        \frac{  \lVert  \tilde{K}^{(X,Y)} - K^{(X,Y)} \rVert_F }{ \lVert K^{(X,Y)} \rVert_F} \leq  g\left( \Big(\frac{\gamma_2}{\gamma}\Big)^N  \right) +  \hat{c}\,  g\! \left( \Big(\frac{\gamma}{\gamma_1}\Big)^N \right).
    \end{equation}
    where
    \begin{equation}\label{eq:chat}
    \hat{c}=2+2 \sum_{j=1}^{d-1}  \frac{ [( |\gamma_3/\gamma_1|  +1  )N]^j (2d)^{j-1} }{ j!}.
    \end{equation}
    Moreover,
    if the upper bound in \cref{eqn:block_err_bound} is viewed as a real function in $\gamma$ on the interval $(\gamma_1,\gamma_2)$, then
\begin{enumerate}
    \item this upper bound has a unique minimizer
    \begin{equation}\label{eq:gammamin}
     \gamma^*=\left(\frac{ (\gamma_2^N-\gamma_1^N) \sqrt{(\gamma_1\gamma_2)^N\hat{c}}-(\gamma_1\gamma_2)^N(\hat{c}-1) }{\gamma_2^N - \gamma_1^N\hat{c}}\right)^{1/N}\in(\gamma_1,\gamma_2);
    \end{equation}

    \item the minimum of this upper bound decays asymptotically as $\mathcal{O} \left( |\gamma_2/\gamma_1|^{-N/2}  \right) $.
\end{enumerate}
\end{proposition}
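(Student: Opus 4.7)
The plan is to lift the pointwise bound of \cref{thm:rel_err_bound} to the Frobenius norm, then solve the one-variable minimization along the lines of \cref{thm:optimal_gamma,thm:rel_err_bound_d1}. From $\|\tilde K^{(X,Y)}-K^{(X,Y)}\|_F^2=\sum_{x,y}|\kappa(x,y)|^2|\varepsilon(x,y)|^2$ one has $\|\tilde K^{(X,Y)}-K^{(X,Y)}\|_F\le\bigl(\max_{x,y}|\varepsilon(x,y)|\bigr)\|K^{(X,Y)}\|_F$, so it suffices to bound the pointwise relative error uniformly in $(x,y)$.

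For each $(x,y)$, \cref{thm:rel_err_bound} gives $|\varepsilon(x,y)|\le g(|y/\gamma|^N)+c(x,y)g(|\gamma/x|^N)$. Monotonicity of $g$ on $(1,\infty)$ yields $g(|y/\gamma|^N)\le g((\gamma_2/\gamma)^N)$. The delicate piece is $c(x,y)g(|\gamma/x|^N)$, because $|x|$ may approach $0$, so that $|y/x|$ is \emph{not} bounded by $\gamma_3/\gamma_1$. My remedy is to keep the blow-up of $(|y/x|+1)^j$ coupled with the decay of $g^i(|\gamma/x|^N)$. Rewriting
\[(|y/x|+1)^jg^i(|\gamma/x|^N)=\frac{(|y|+|x|)^j|x|^{iN-j}}{(\gamma^N-|x|^N)^i},\]
the numerator is strictly increasing and the denominator strictly decreasing in $|x|\in(0,\gamma_1]$ whenever $N\ge d$ (which forces $iN-j\ge 1$), so the ratio is maximized at $|x|=\gamma_1$. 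Substituting $|x|=\gamma_1$ and $|y|\le\gamma_3$ produces the uniform bound $(|y/x|+1)^jg^i(|\gamma/x|^N)\le(\gamma_3/\gamma_1+1)^jg^i((\gamma/\gamma_1)^N)$. Summing over $i$ via the geometric identity $\sum_{i=1}^{j+1}g^i\le 2g$ (valid once $g((\gamma/\gamma_1)^N)\le 1/2$, arranged by demanding $N$ to exceed a threshold that depends only on $d,\gamma_1,\gamma_2,\gamma_3$) and then over $j=1,\ldots,d-1$ with the weights $2(2d)^{j-1}N^j/j!$ from the proof of \cref{thm:rel_err_bound} yields $c(x,y)g(|\gamma/x|^N)\le\hat c\,g((\gamma/\gamma_1)^N)$ with $\hat c$ as in \cref{eq:chat}. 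Collecting all lower bounds on $N$ into a single $N_3$ then delivers \cref{eqn:block_err_bound}.

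For the minimization, set $t=\gamma^N$, $a=\gamma_1^N$, $b=\gamma_2^N$, so that the right-hand side of \cref{eqn:block_err_bound} becomes $h(t)=t/(b-t)+\hat c a/(t-a)$ on $(a,b)$. Setting $h'(t)=0$ reduces to $b(t-a)^2=\hat c a(b-t)^2$; taking positive square roots gives $\sqrt b(t-a)=\sqrt{\hat c a}(b-t)$, and rationalization produces $t^*=[\sqrt{\hat c ab}(b-a)-ab(\hat c-1)]/(b-a\hat c)$, so that $\gamma^*=(t^*)^{1/N}$ matches \cref{eq:gammamin}. Uniqueness and location of the minimizer in $(\gamma_1,\gamma_2)$ follow the template of \cref{thm:optimal_gamma}: the polynomial $p(t)=b(t-a)^2-\hat c a(b-t)^2$ has positive leading coefficient $b-\hat c a$ once $N$ is large (since $\hat c$ is only polynomial in $N$ while $b/a=(\gamma_2/\gamma_1)^N$ is exponential), and satisfies $p(a)=-\hat c a(b-a)^2<0<b(b-a)^2=p(b)$, so it has exactly one root in $(a,b)$. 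Writing $r=\sqrt{\hat c a/b}$ and using the identities $t^*-a=r(b-a)/(1+r)$ and $b-t^*=(b-a)/(1+r)$ obtained from $\sqrt b(t^*-a)=\sqrt{\hat c a}(b-t^*)$, the minimum simplifies to $h(t^*)=[(\hat c+1)+2\sqrt{\hat c}(\gamma_2/\gamma_1)^{N/2}]/[(\gamma_2/\gamma_1)^N-1]=\mathcal{O}(|\gamma_2/\gamma_1|^{-N/2})$, the polynomial-in-$N$ factor $\sqrt{\hat c}$ being absorbed into the asymptotic constant.

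The main obstacle is the uniform control of $c(x,y)$ in the second paragraph: naively using $|y/x|\le\gamma_3/\gamma_1$ is false, and one must instead pair $(|y/x|+1)^j$ with $g^i(|\gamma/x|^N)$ so as to exploit the cancellation between $|x|^{-j}$ in the former and $|x|^{iN}$ in the latter. Once this monotonicity-in-$|x|$ step is in place, the Frobenius-versus-max reduction and the subsequent minimization are routine extensions of the $d=1$ analysis in \cref{thm:rel_err_bound_d1}.
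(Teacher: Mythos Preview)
Your overall strategy matches the paper's: reduce the Frobenius error to a uniform pointwise bound, handle the unboundedness of $|y/x|$ by monotonicity in $|x|$ of a coupled product, and then minimize as in \cref{thm:optimal_gamma}. Your minimization is in fact more explicit than the paper's (which leaves the algebra to the reader), and your factored form $p(t)=b(t-a)^2-\hat c\,a(b-t)^2$ makes the root structure clearer.

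There is, however, one genuine slip in the first part. You assert that the geometric bound $\sum_{i=1}^{j+1} g^i((\gamma/\gamma_1)^N)\le 2g((\gamma/\gamma_1)^N)$ holds once $N$ exceeds a threshold depending only on $d,\gamma_1,\gamma_2,\gamma_3$. That is false: the underlying condition $(\gamma/\gamma_1)^N\ge 3$ requires $N\ge \log 3/\log(\gamma/\gamma_1)$, which blows up as $\gamma\to\gamma_1^+$, so no $\gamma$-independent $N_3$ suffices. The fix is simply to swap the order of your two steps. Apply the geometric sum \emph{first}, at the level of $g^i(|\gamma/x|^N)$: since $|\gamma/x|>\gamma_1/|x|\ge \gamma_1/\max_{x\in X}|x|>1$, the condition $|\gamma/x|^N\ge 3$ follows from $(\gamma_1/\max_{x\in X}|x|)^N\ge 3$, which is independent of $\gamma$. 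Only then invoke your monotonicity argument, now with $i=1$, to pass from $(|y/x|+1)^j g(|\gamma/x|^N)$ to $(\gamma_3/\gamma_1+1)^j g((\gamma/\gamma_1)^N)$. Note that the resulting $N_3$ depends on $X$ through $\max_{x\in X}|x|$, not merely on $\gamma_1,\gamma_2,\gamma_3$; this matches the paper.

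This is exactly how the paper arranges things, though packaged differently: it takes the already-assembled bound $c(|x|,|y|)\,g(|\gamma/x|^N)$ from \cref{thm:rel_err_bound} (whose threshold $N_1$ is built on $\gamma_1/|x|$, hence $\gamma$-free) and shows the product is increasing in $|x|$ via the factorization $[c(|x|,|y|)\,|x|^{d-1}]\cdot[g(|\gamma/x|^N)\,|x|^{1-d}]$, each factor increasing. Your term-by-term monotonicity of $(|y|+|x|)^j|x|^{iN-j}/(\gamma^N-|x|^N)^i$ achieves the same end and is arguably more transparent; it just has to come \emph{after} the geometric summation, not before.
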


\begin{proof}
Following the proof of \cref{thm:optimal_gamma}, we can set $N_3$ to be the maximum of $N_2$ in \cref{thm:optimal_gamma} for all $x\in X$.
Based on the entrywise error bound in \cref{eqn:rel_err_bound}, we can just show the following inequalities for $N>N_3$ and any $x\in X,y\in Y$:
\[g\left( \Big|\frac{y}{\gamma}\Big|^N \right)< g\left( \Big(\frac{\gamma_2}{\gamma}\Big)^N \right),\quad c  g\left( \Big|\frac{\gamma}{x}\Big|^N \right)<\hat{c} \, g\! \left( \Big(\frac{\gamma}{\gamma_1}\Big)^N \right).\]
The first inequality is obvious. We then focus on the second one.
    Just for the purpose of this proof, we write $c$ in \cref{eq:c} as $c(|x|, |y|)$ to indicate its dependency on $|x|$ and $|y|$.
    $c(|x|, |y|)$ can be viewed as a degree-$(d-1)$ polynomial in $1/|x|$ and $|y|$ with all positive coefficients.

Write
    \begin{equation*}
        c(|x|, |y|) \, g\! \left( \Big|\frac{\gamma}{x}\Big|^N \right)
        =  \left[ c(|x|, |y|) |x|^{d-1}  \right] \left[ g\! \left( \Big|\frac{\gamma}{x}\Big|^N \right) |x|^{1-d} \right].
    \end{equation*}
    The first term $c(|x|, |y|) |x|^{d-1}$ is a polynomial in $|x|$ with all positive coefficients and increases with $|x|$.
    The second term is
    \begin{equation*}
       g\! \left( \Big|\frac{\gamma}{x}\Big|^N \right) |x|^{1-d} = \frac{|x|^{N-d+1}}{ \gamma^N - |x|^{N} }.
    \end{equation*}
    With $N>N_3$, it can be shown that this term is also strictly increasing in $|x|$ for $0<|x| < \gamma_1 < \gamma$. 

    Thus for any $x\in X,y\in Y$,
    \[
        c(|x|, |y|) \, g\! \left( \Big|\frac{\gamma}{x}\Big|^N \right) < c(\gamma_1, |y|) \, g\! \left( \Big|\frac{\gamma}{\gamma_1}\Big|^N \right)
        < c(\gamma_1, \gamma_3) \, g\! \left( \Big|\frac{\gamma}{\gamma_1}\Big|^N \right)
        =\hat{c}\, g\! \left( \Big|\frac{\gamma}{\gamma_1}\Big|^N \right),
    \]
    where the constant $\hat{c}$ is defined in \cref{eq:chat} which is $c$ in \cref{eq:c} with $|y/x|$ replaced by $\gamma_3/\gamma_1$.

     The minimizer $\gamma^*$ in \cref{eq:gammamin} for the upper bound is the root of a quadratic polynomial in $(\gamma_1,\gamma_2)$
     and can be obtained following the proof of \cref{thm:optimal_gamma}.
\end{proof}

Based on this corollary, we can draw conclusions similar to those in \cref{rem:error1}.
In addition, although $\gamma_3$ is needed so that $Y$ is on a bounded domain in order to derive the error bound \cref{eqn:block_err_bound},
we believe such an limitation is not needed in practice. In fact, the analytical compression tends to be more accurate when the points $y$
are farther away from the set $X$. Also, if $\gamma_3$ is too large, then we may slightly shift the $x$ points to make sure $|x|$ is larger than
a positive number $\gamma_0$
so as to similarly derive an error bound using $\gamma_0$ instead of $\gamma_3$.

\subsection{A practical method to estimate the optimal radius $\gamma$}\label{subsub:est}

In \cref{thm:rel_err_bound_d1,cor:block_err}, the upper bounds are used to estimate the optimal choice of $\gamma$ for
the radius of the proxy surface. In practice, it is possible that the upper bound may be conservative, especially when $d>1$.
Thus, we also propose the following method to quickly obtain a numerical estimate of the optimal choice.

In \cref{thm:rel_err_bound_d1,cor:block_err}, the optimal $\gamma^*$ is independent of the number of points in $X$ and $Y$ and their distribution.
This feature motivates the idea to pick subsets $X_0 \subset \mathcal{D} (0;\gamma_1)$ and $Y_0 \subset \mathcal{A} ( 0; \gamma_2, \gamma_3) $ and
use them to estimate the actual error. That is, we would expect the following two quantities to have similar behaviors when $\gamma$ varies in $(\gamma_1, \gamma_2)$:
\begin{equation}\label{eqn:E_n^0}
	 \mathcal{E}_N^0 (\gamma) :=  \frac{  \lVert  K^{(X_0,Y_0)} - \tilde{K}^{(X_0,Y_0)} \rVert_F }{ \lVert K^{(X_0,Y_0)} \rVert_F},\quad
\mathcal{E}_N (\gamma) :=  \frac{  \lVert  K^{(X,Y)} - \tilde{K}^{(X,Y)} \rVert_F }{ \lVert K^{(X,Y)} \rVert_F}.
\end{equation}
$\mathcal{E}_N^0 (\gamma) $ can be used as an estimator of the actual approximation error $\mathcal{E}_N (\gamma) $.
Note that $K^{(X_0,Y_0)}$ and $\tilde{K}^{(X_0,Y_0)}$ are computable through \cref{eqn:kernel} and \cref{eqn:kn}, respectively, so $\mathcal{E}_N^0 (\gamma)$ can be computed explicitly, and the cost is extremely small if $|X_0| \ll |X|$ and $|Y_0| \ll |Y|$.

Note that in rank-structured matrix computations, often an admissible condition or separation parameter is prespecified for the compression of multiple off-diagonal
blocks. In the case of kernel matrices, it means that the process of estimating the optimal $\gamma$ needs to be run only once and can then be used
in multiple compression steps.

\begin{example}\label{ex2}
We use an example to demonstrate the numerical selection of the optimal $\gamma$. Consider $d=2,3$ and the two sets $X$ and $Y$ in \cref{ex1}
with the same values $\gamma_1,\gamma_2,\gamma_3$ (see \cref{fig:bound_d1_xyz}). Fix $N = 30$.
\end{example}

For the sets $X_0$ and $Y_0$ we choose, we set $l = |X_0| = |Y_0|$ to be $1$, $2$, or $3$. We make sure $x = \gamma_1$ and $y=\gamma_2$ as points of $\mathbb{C}$ are always in $X_0$ and $Y_0$, respectively. These two boundary points correspond to the worst case scenarios of the error bound developed before. Thus, $\mathcal{E}_N^0 (\gamma) $ is more likely to capture the behavior of $\mathcal{E}_N (\gamma)$. Any additional points in $X_0$ are uniformly distributed in the circle $\mathcal{C} (0;\gamma_1)$ and any additional points in $Y_0$ are uniformly distributed in $\mathcal{C} (0;\gamma_2)$.
\begin{figure}[tbhp]
\centering
\subfloat[$d=2$]{\label{fig:d2_1}\includegraphics[height=1.8in]{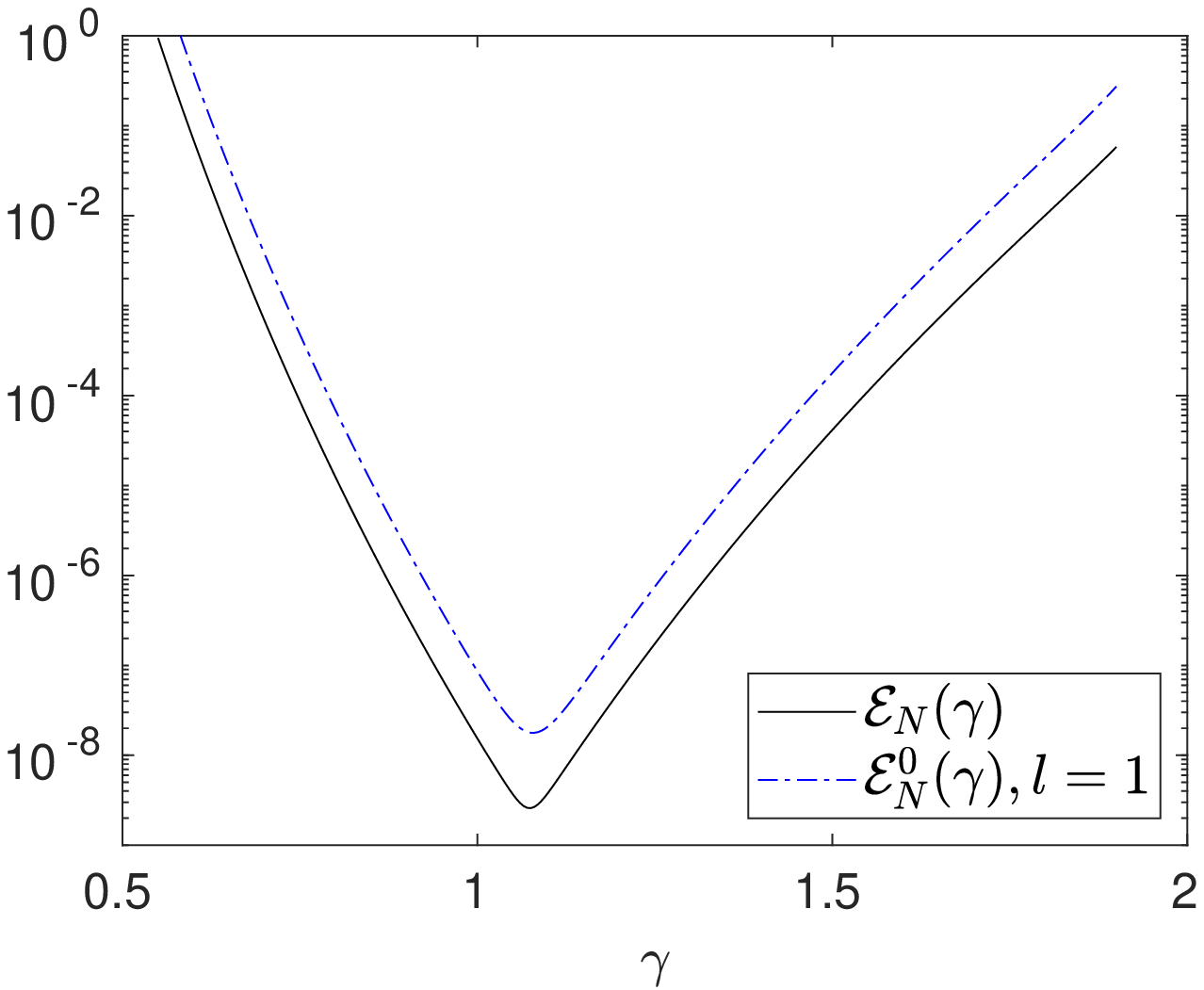}}\quad
\subfloat[$d=3$]{\label{fig:d3_1}\includegraphics[height=1.8in]{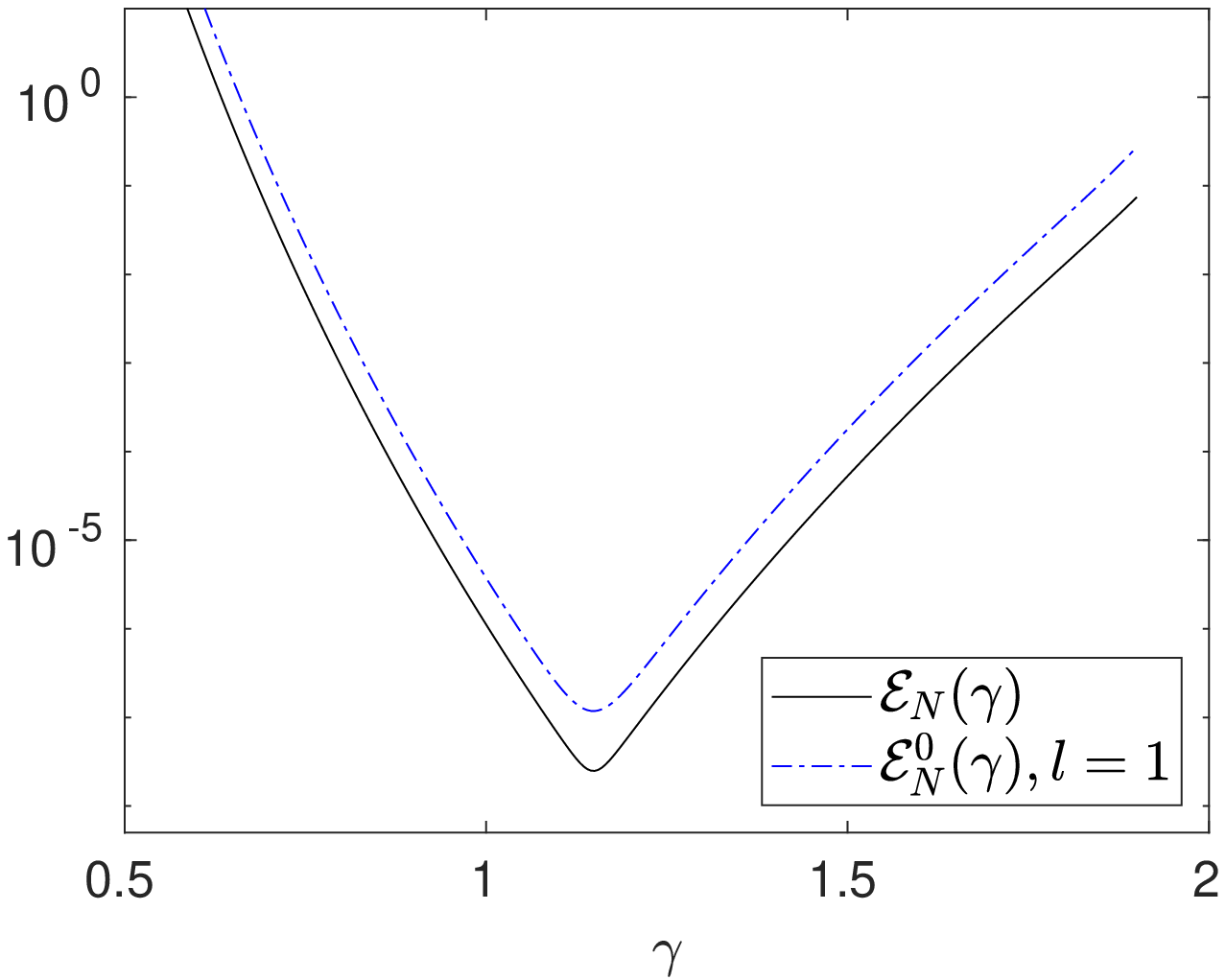}}\\
\subfloat[$d=2$, zoomed in around the critical point]{\label{fig:d2_2}\includegraphics[height=1.8in]{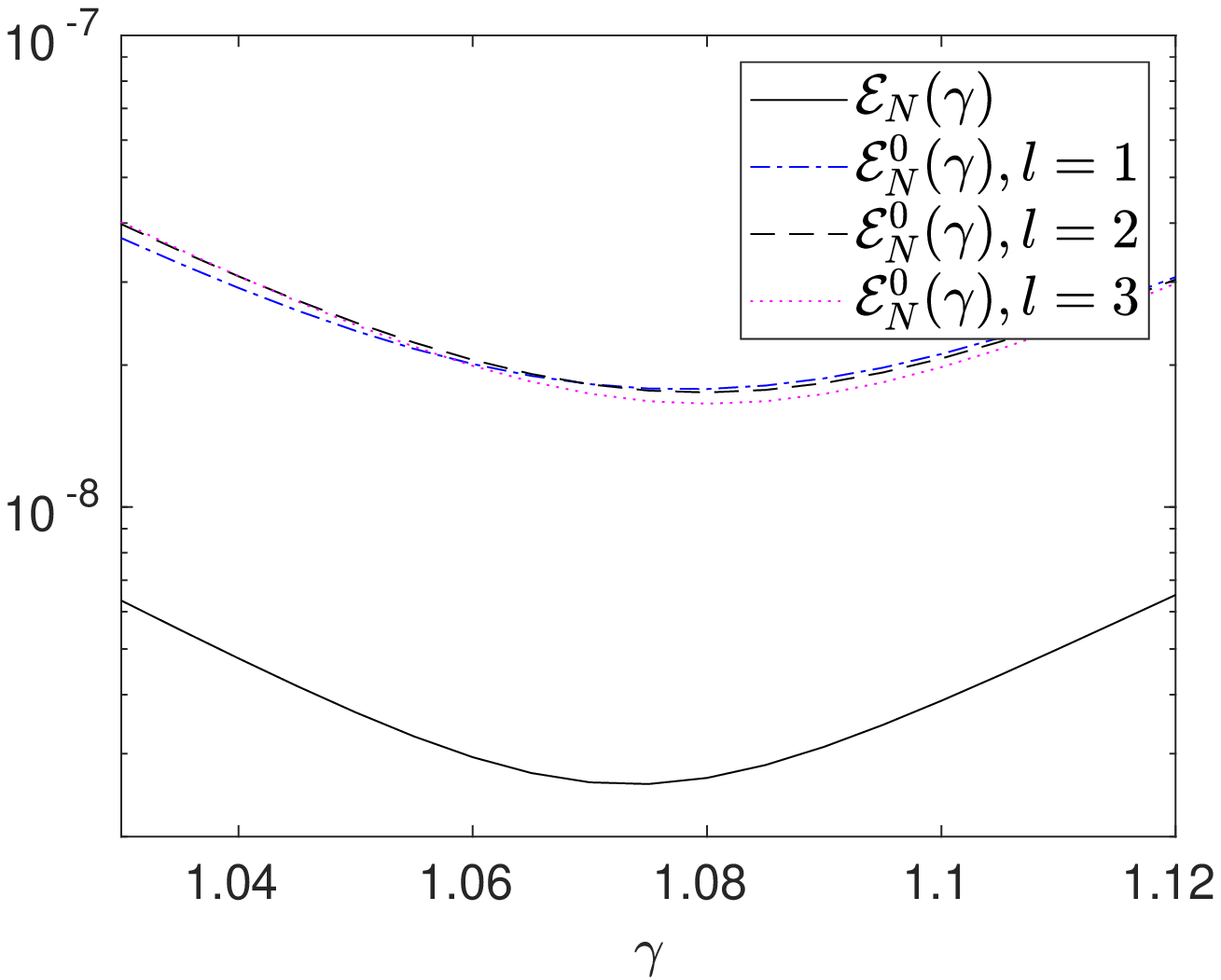}}\quad
\subfloat[$d=3$, zoomed in around the critical point]{\label{fig:d3_2}\includegraphics[height=1.8in]{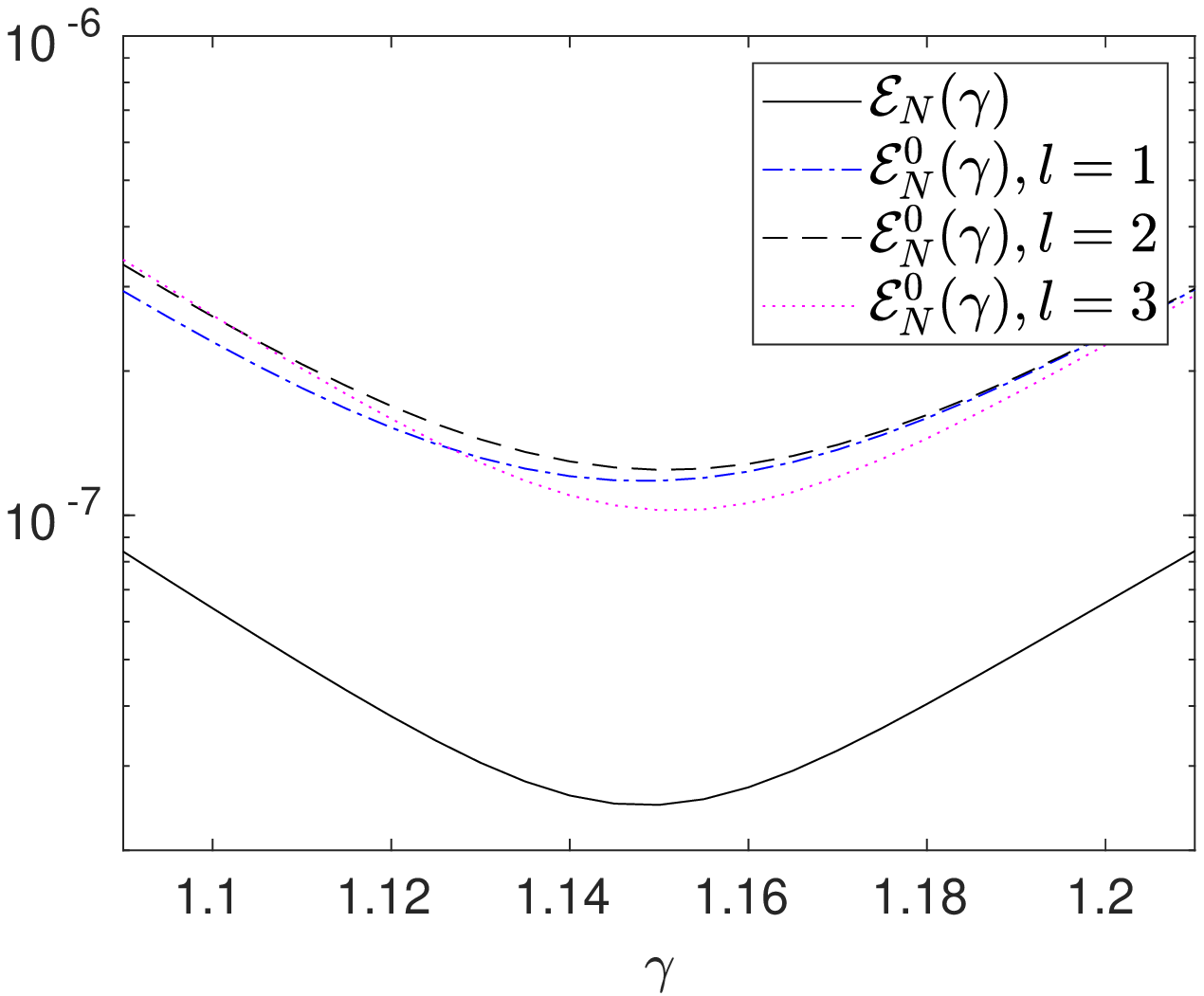}}
\caption{\cref{ex2}: For $d=2$ and $3$, how the estimator $\mathcal{E}_N^0 (\gamma)$ with $l = 1,2,3$ compare with
the actual error $\mathcal{E}_N(\gamma)$.}
\label{fig:bound_d23}
\end{figure}

With $l=1$, both $\mathcal{E}_N(\gamma) $ and $\mathcal{E}_N^0 (\gamma)$ are plotted. See \cref{fig:d2_1,fig:d3_1} for $d=2$ and $3$, respectively.
We can see that $\mathcal{E}_N^0 (\gamma)$ already gives a good estimate of the behavior of $\mathcal{E}_N (\gamma)$ for both cases. Then in \cref{fig:d2_2,fig:d3_2}
we plot $\mathcal{E}_N^0 (\gamma)$ for $l=1,2, 3$ and zoom in at around the minimum since they almost coincide with each other away from the minimum.
The minimums of the three cases are very close to each other, which indicates that $l=1$ suffices to give a reliable
estimate of the optimal radius in practice.

\section{Low-rank approximation accuracy in hybrid compression and representative point selection}
\label{sec:hybrid}

The analytical compression in \cref{sec:optimal} can serve as a preliminary low-rank approximation, which
is typically followed by an algebraic compression step to get a more
compact low-rank approximation. In this section, we analyze the approximation error of such hybrid (analytical/algebraic) compression
applied to $K^{(X,Y)}$.

Suppose $m=|X|$ and $n=|Y|$ are sufficiently large and $N=|Z|$ is fixed. With the preliminary low-rank
approximation in \cref{eq:lrapprox}, since $K^{(X,Z)}$ has a much smaller column size than $K^{(X,Y)}$, it becomes
practical to apply an SRRQR factorization to $K^{(X,Z)}$ to obtain the following approximation:
\begin{equation}\label{eq:uie}
K^{(X,Z)}\approx UK^{(X,Z)}|_J,\quad\text{with}\quad U=  P \left(\begin{array}[c]{c}
I \\ E
\end{array}\right),
\end{equation}
where $P$ is a permutation matrix so that $K^{(X,Z)}|_{J}$ is formed by selected rows of
$K^{(X,Z)}$ with the row index set $J$. $J$ essentially corresponds to a subset $\hat{X}\subset X$ and $\hat{X}$ can be referred to as a set of \textit{representative points} of $X$ so that $K^{(\hat{X},Z)}\equiv K^{(X,Z)}|_{J}$.
\cref{eq:uie} is an interpolative decomposition of $K^{(X,Z)}$.
It is also referred to as a structure-preserving rank-revealing (SPRR) factorization in \cite{toeprs} since $K^{(\hat{X},Z)}$
is a submatrix of $K^{(X,Z)}$.

Although $U$ generally does not have orthonormal columns, the SRRQR factorization keeps its norm under control in the sense that entries of $E$
have magnitudes bounded by a number $e$ (e.g., $2$ or $\sqrt{N}$). See \cite{gu1996efficient} for details.

We then have
\begin{subequations}\label{eq:hybrid}
\begin{align}
\hskip15mm K^{(X,Y)}& \approx \tilde{K}^{(X,Y)}=K^{(X,Z)}\Phi ^{(Z,Y)}
&\hfill \text{(by \cref{eq:lrapprox})} \label{eq:hybrid1}\\
& \approx UK^{(\hat{X},Z)}\Phi ^{(Z,Y)} & \hfill \text{(by \cref{eq:uie})}  \label{eq:hybrid2} \\
& =U\tilde{K}^{(\hat{X},Y)}\approx UK^{(\hat{X},Y)}, & \,\text{(by \cref{eqn:kn} and similar to \cref{eq:lrapprox})}\label{eq:hybrid3}
\end{align}
\end{subequations}
which is an SPRR factorization of $K^{(X,Y)}$.

Similarly, an SRRQR factorization can further be applied to $K^{(\hat{X},Y)}$ to produce
\begin{equation}
K^{(\hat{X},Y)}\approx K^{(\hat{X},\hat{Y})}V^T,\quad\text{with}\quad V=Q\left(\begin{array}[c]{c}
I \\ F
\end{array}\right),\label{eq:vif}
\end{equation}
where $Q$ is a permutation matrix and $\hat{Y}\subset Y$.
The approximation \cref{eq:hybrid} together with \cref{eq:vif}
essentially enables us to quickly to select representative points from both $X$ and $Y$. In another
word, we have a skeleton factorization of $K^{(X,Y)}$ as
\begin{equation}\label{eq:skel}
K^{(X,Y)}\approx UK^{(\hat{X},\hat{Y})}V^T.
\end{equation}

Note that computing an SPRR or skeleton factorization for $K^{(X,Y)}$ directly (or to find a submatrix $K^{(\hat{X},\hat{Y})}$ with the largest ``volume" \cite{gor01,tyr96}) is typically prohibitively expensive for large $m$ and $n$.
Here, the proxy point method substantially reduces the cost. In fact,
\cref{eq:hybrid1,eq:hybrid3} are done analytically with no compression cost. Only the SRRQR factorizations of skinny matrices
($K^{(X,Z)}$ and/or $K^{(\hat{X},Y)}$)
are needed. The total compression cost is $\mathcal{O}(mN r)$ for \cref{eq:hybrid} or $\mathcal{O}(mN r+nr^2)$ for \cref{eq:skel} instead of $\mathcal{O}(mnr)$, where $r=|\hat{X}|\ge|\hat{Y}|$. As we have discussed before, $N$ is only a constant independent of $m$ and $n$. Thus,
this procedure is significantly more efficient than applying SRRQR factorizations directly to the original kernel matrix.

The next theorem concerns the approximation error of the hybrid compression via either \cref{eq:hybrid} or \cref{eq:skel}.

\begin{theorem}\label{thm:hybrid_err}
Suppose $0<|x|<\gamma_1<\gamma<\gamma_2<|y|<\gamma_3$ for any $x\in X,y\in Y$ and the $N$ proxy points in $Z$ are
located on the proxy surface with radius $\gamma^*$.
Let $r=|X|$ and let the relative tolerance in the kernel approximation be
$\tau_1$ (i.e., $|\varepsilon(x,y)|<\tau_1$ for $\varepsilon(x,y)$ in \cref{eqn:kn_analytic}) and the
relative approximation tolerance (in Frobenius norm) in the SRRQR factorizations \cref{eq:uie,eq:vif} be $\tau_2$.
Assume the entries of $E$ in \cref{eq:uie} and $F$ in \cref{eq:vif} have magnitudes bounded by $e$.
Then the approximation of $K^{(X,Y)}$ by \cref{eq:hybrid} satisfies
\begin{equation}
\frac{\lVert K^{(X,Y)}-UK^{(\hat{X},Y)}\lVert _{F}}{\lVert K^{(X,Y)}\lVert _{F}}
< s_{1}\tau_{1}+s_{2}\tau_{2},  \label{eqn:hybrid_err}
\end{equation}
where
\begin{align*}
& s_1 = 1 +\sqrt{r + (m-r) r e^2} \sqrt{1 - \frac{ (m-r) (\gamma_2- \gamma_1)^{2d} }{m (\gamma_1 + \gamma_3)^{2d}} },\quad
s_2 = \frac{ \gamma^* (\gamma_1+\gamma_3)^d}{ (\gamma_2 - \gamma^*) (\gamma^*-\gamma_1)^d}   .
\end{align*}
The approximation of $K^{(X,Y)}$ by \cref{eq:skel} satisfies
\begin{equation}
\frac{\lVert K^{(X,Y)}-UK^{(\hat{X},\hat{Y})}V^{T}\lVert _{F}}{\lVert
K^{(X,Y)}\lVert _{F}}<s_{1}\tau _{1}+\tilde{s}_2\tau _{2},\label{eq:skelerr}
\end{equation}
where $\tilde{s}_2=s_{2}+s_{1}-1$.
\end{theorem}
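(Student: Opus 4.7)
The plan is to chain the three approximation steps in \cref{eq:hybrid} through triangle inequalities and convert each resulting matrix-norm term into the explicit constants $s_1$, $s_2$ using the entrywise kernel error bound $|\varepsilon(x,y)|<\tau_1$ from \cref{prop:rel_err} together with the standard SRRQR guarantees $\|K^{(X,Z)} - UK^{(\hat X,Z)}\|_F \le \tau_2\|K^{(X,Z)}\|_F$ and $\|K^{(\hat X,Y)} - K^{(\hat X,\hat Y)}V^T\|_F \le \tau_2\|K^{(\hat X,Y)}\|_F$. Throughout, I will use the geometry-based two-sided entrywise bounds $\gamma_2-\gamma_1\le|x-y|\le\gamma_1+\gamma_3$, $|x-z|\ge\gamma^*-\gamma_1$, and $|y-z|\ge\gamma_2-\gamma^*$ with $|z|=\gamma^*$ to control the Frobenius norms of $K^{(X,Z)}$, $K^{(X,Y)}$, $K^{(\hat X,Y)}$, and $\Phi^{(Z,Y)}$.

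For \cref{eqn:hybrid_err}, I combine \cref{eq:hybrid1}--\cref{eq:hybrid3} via
\[
\|K^{(X,Y)} - UK^{(\hat X,Y)}\|_F \le \|K^{(X,Y)} - \tilde K^{(X,Y)}\|_F + \|(K^{(X,Z)} - UK^{(\hat X,Z)})\Phi^{(Z,Y)}\|_F + \|U\|_2\,\|\tilde K^{(\hat X,Y)} - K^{(\hat X,Y)}\|_F.
\]
The first term is immediately $<\tau_1\|K^{(X,Y)}\|_F$ since $\tilde\kappa(x,y)-\kappa(x,y)=\varepsilon(x,y)\kappa(x,y)$ entrywise. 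For the third term, the interpolative structure $U=P\left(\begin{smallmatrix}I\\E\end{smallmatrix}\right)$ with $|E_{ij}|\le e$ yields $\|U\|_2\le\|U\|_F\le\sqrt{r+(m-r)re^2}$, and the crucial refinement is to split $\|K^{(X,Y)}\|_F^2 = \|K^{(\hat X,Y)}\|_F^2 + \|K^{(X\setminus\hat X,Y)}\|_F^2$ and bound entries of $K^{(X\setminus\hat X,Y)}$ below by $(\gamma_1+\gamma_3)^{-d}$ and entries of $K^{(X,Y)}$ above by $(\gamma_2-\gamma_1)^{-d}$, which gives
\[
\|K^{(\hat X,Y)}\|_F \le \|K^{(X,Y)}\|_F\sqrt{1 - \frac{(m-r)(\gamma_2-\gamma_1)^{2d}}{m(\gamma_1+\gamma_3)^{2d}}}.
\]
Adding the first and third contributions then reproduces exactly $s_1\tau_1\|K^{(X,Y)}\|_F$.

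For the middle term, applying the SRRQR bound gives $\|K^{(X,Z)}-UK^{(\hat X,Z)}\|_F\|\Phi^{(Z,Y)}\|_2 \le \tau_2\|K^{(X,Z)}\|_F\|\Phi^{(Z,Y)}\|_2$. The geometry yields $\|K^{(X,Z)}\|_F \le \sqrt{mN}(\gamma^*-\gamma_1)^{-d}$, $\|\Phi^{(Z,Y)}\|_2\le\|\Phi^{(Z,Y)}\|_F \le \gamma^*\sqrt{n/N}/(\gamma_2-\gamma^*)$, and $\|K^{(X,Y)}\|_F\ge\sqrt{mn}(\gamma_1+\gamma_3)^{-d}$, so the factors $\sqrt{m}$, $\sqrt{n}$, $\sqrt{N}$ cancel and leave exactly $s_2\tau_2\|K^{(X,Y)}\|_F$, completing \cref{eqn:hybrid_err}. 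For the skeleton bound \cref{eq:skelerr}, I add a further triangle inequality
\[
K^{(X,Y)} - UK^{(\hat X,\hat Y)}V^T = \bigl(K^{(X,Y)} - UK^{(\hat X,Y)}\bigr) + U\bigl(K^{(\hat X,Y)} - K^{(\hat X,\hat Y)}V^T\bigr),
\]
bound the first piece by $(s_1\tau_1+s_2\tau_2)\|K^{(X,Y)}\|_F$ using what was just proved, and bound the second piece by $\|U\|_2\,\tau_2\,\|K^{(\hat X,Y)}\|_F \le (s_1-1)\tau_2\|K^{(X,Y)}\|_F$ by reusing the same $\|U\|_2$ and $\|K^{(\hat X,Y)}\|_F$ estimates, producing $\tilde s_2 = s_2 + (s_1-1)$.

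The main obstacle is the sharp comparison of $\|K^{(\hat X,Y)}\|_F$ to $\|K^{(X,Y)}\|_F$ that produces the nontrivial square-root factor inside $s_1$: the naive inequality $\|K^{(\hat X,Y)}\|_F \le \|K^{(X,Y)}\|_F$ would only give the cruder constant $1+\sqrt{r+(m-r)re^2}$. Capturing the subtracted contribution requires both the lower bound $|\kappa|^2\ge(\gamma_1+\gamma_3)^{-2d}$ on every entry of $K^{(X\setminus\hat X,Y)}$ and the matching upper bound $|\kappa|^2\le(\gamma_2-\gamma_1)^{-2d}$ on every entry of $K^{(X,Y)}$; this is the only point in the argument where the full geometric hypotheses on $X$ and $Y$ (beyond the proxy separation $\gamma_1<\gamma^*<\gamma_2$) are actually used.
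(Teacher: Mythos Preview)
Your proposal is correct and follows essentially the same route as the paper: the identical three-term triangle-inequality decomposition of $K^{(X,Y)}-UK^{(\hat X,Y)}$, the same entrywise geometric bounds \cref{eq:phizy}--\cref{eq:kxy}, and the same sharp ratio $\|K^{(\hat X,Y)}\|_F/\|K^{(X,Y)}\|_F$ via the complementary-block lower bound. The only cosmetic differences are that the paper uses $\|U\|_F$ and $\|\Phi^{(Z,Y)}\|_F$ directly rather than passing through the $2$-norm, and writes the skeleton estimate \cref{eq:skelerr} as a four-term split from scratch rather than reusing \cref{eqn:hybrid_err} plus one extra term; both packagings yield the same $\tilde s_2=s_2+s_1-1$.
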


\begin{proof}
The following inequalities for $x\in X,y\in Y,z\in Z$ will be useful in the proof:
\begin{gather}
|\phi (z,y)|< \frac{\gamma ^{\ast }}{N(\gamma _{2}-\gamma ^{\ast })},
\label{eq:phizy} \\
|\kappa (x,z)| <\frac{1}{(\gamma ^{\ast }-\gamma _{1})^{d}},
\label{eq:kxz} \\
\frac{1}{(\gamma _{1}+\gamma _{3})^{d}} <|\kappa (x,y)|<\frac{1}{(\gamma_{2}-\gamma _{1})^{d}}.  \label{eq:kxy}
\end{gather}

Note that
\begin{align}
& \lVert K^{(X,Y)}  - U K^{(\hat{X},Y)} \lVert_F \label{eqn:hybrid_err_proof} \\
\leq\ & \lVert  K^{(X,Y)}  -  \tilde{K}^{(X,Y)}  \rVert_F + \lVert  \tilde{K}^{(X,Y)}  - U  K^{(\hat{X},Y)}  \rVert_F \nonumber\\
\leq\  & \lVert K^{(X,Y)}  -  \tilde{K}^{(X,Y)} \rVert_F +  \lVert \tilde{K}^{(X,Y)} - U\tilde{K}^{(\hat{X},Y)} \rVert_F
 + \lVert  U \tilde{K}^{(\hat{X},Y)} -U  K^{(\hat{X},Y)}\rVert_F \nonumber\\
=\  & \lVert K^{(X,Y)}  -  \tilde{K}^{(X,Y)} \rVert_F +\lVert  K^{(X,Z)}\Phi^{(Z,Y)} - U K^{(\hat{X}, Z)}\Phi^{(Z,Y)} \rVert_F \nonumber\\
& \quad +  \lVert  U \tilde{K}^{(\hat{X},Y)} -U  K^{(\hat{X},Y)}\rVert_F \hskip20mm\text{(by \cref{eq:hybrid1,eq:hybrid2,eq:hybrid3})}\nonumber\\
\leq\  & \lVert  K^{(X,Y)}  -  \tilde{K}^{(X,Y)}  \rVert_F  + \lVert K^{(X,Z)} -  U K^{(\hat{X}, Z)} \rVert_F \lVert \Phi^{(Z,Y)} \rVert_F \nonumber\\
& \quad+  \lVert U \rVert_F \lVert K^{(\hat{X},Y)} - \tilde{K}^{(\hat{X},Y)} \rVert_F.\nonumber
\end{align}
Now, we derive upper bounds separately for the three terms in the last step
above.

The first term is the approximation error for the original kernel
matrix from the proxy point method. Then
\begin{equation}
\lVert K^{(X,Y)}-\tilde{K}^{(X,Y)}\rVert _{F}\leq \tau _{1}\lVert
K^{(X,Y)}\rVert _{F}.  \label{eq:hybrid_err_proof1}
\end{equation}

Next, from the SPRR factorization of $K^{(X,Z)}$,
\[
\lVert K^{(X,Z)}-UK^{(\hat{X},Z)}\rVert _{F}\lVert \Phi
^{(Z,Y)}\rVert _{F}
\leq \tau _{2}\lVert {K}^{(X,Z)}\rVert _{F}\lVert \Phi ^{(Z,Y)}\rVert _{F}.
\]
\cref{eq:phizy} means
\begin{equation*}
\lVert \Phi ^{(Z,Y)}\rVert _{F}<\sqrt{Nn}\frac{\gamma ^{\ast }}{N(\gamma
_{2}-\gamma ^{\ast })}=\sqrt{\frac{n}{N}}\frac{\gamma ^{\ast }}{\gamma
_{2}-\gamma ^{\ast }}.
\end{equation*}(\ref{eq:kxz}) and (\ref{eq:kxy}) mean\begin{equation*}
\frac{\lVert K^{(X,Z)}\rVert _{F}^{2}}{\lVert K^{(X,Y)}\rVert _{F}^{2}}<\frac{mN/(\gamma ^{\ast }-\gamma _{1})^{2d}}{mn/(\gamma _{1}+\gamma
_{3})^{2d}}=\frac{N}{n}\frac{(\gamma _{1}+\gamma _{3})^{2d}}{(\gamma ^{\ast
}-\gamma _{1})^{2d}}.
\end{equation*}
Then
\begin{align}
\lVert {K}^{(X,Z)}-UK^{(\hat{X},Z)}\rVert _{F}\lVert \Phi^{(Z,Y)}\rVert _{F}
&<\tau _{2}\sqrt{\frac{n}{N}}\frac{\gamma ^{\ast }}{\gamma _{2}-\gamma
^{\ast }}\lVert {K}^{(X,Z)}\rVert _{F} \label{eq:hybrid_err_proof2} \\
&<\tau _{2}\frac{\gamma ^{\ast }(\gamma _{1}+\gamma _{3})^{d}}{(\gamma
_{2}-\gamma ^{\ast })(\gamma ^{\ast }-\gamma _{1})^{d}}\lVert {K}^{(X,Y)}\rVert _{F}.\nonumber
\end{align}

Thirdly,
\begin{gather*}
\lVert U \rVert_F = \left\lVert P \begin{pmatrix} I \\ E   \end{pmatrix}  \right\rVert_F= \left\lVert \begin{pmatrix} I \\ E   \end{pmatrix}  \right\rVert_F
\leq \sqrt{r + (m-r) r e^2},\\
\lVert K^{(\hat{X}, Y)} - \tilde{K}^{(\hat{X}, Y)} \rVert_F \leq  \tau_1 \lVert K^{(\hat{X}, Y)} \rVert_F.
\end{gather*}
According to \cref{eq:kxy},
\[
 \frac{\lVert K^{(\hat{X},Y)} \rVert_F^2}{\lVert K^{(X,Y)} \rVert_F^2}  =  1 - \frac{\lVert K^{(X \backslash \hat{X},Y)}  \rVert_F^2}{\lVert K^{(X,Y)} \rVert_F^2}  \leq   1 - \frac{(m\!-\!r) n /{(\gamma_1\!+\!\gamma_3)^{2d}} }{m n /{(\gamma_2-\gamma_1)^{2d}}}   =   1 - \frac{ (m\!-\!r) (\gamma_2\!-\! \gamma_1)^{2d} }{m (\gamma_1 + \gamma_3)^{2d}} .
\]
Then
\begin{align}\label{eq:hybrid_err_proof3}
& \lVert U \rVert_F \lVert K^{(\hat{X},Y)} - \tilde{K}^{(\hat{X},Y)} \rVert_F  \\  \leq \ &\tau_1 \sqrt{r + (m-r) r e^2} \sqrt{1 - \frac{ (m-r) (\gamma_2- \gamma_1)^{2d} }{m (\gamma_1 + \gamma_3)^{2d}} } \lVert K^{(X,Y)} \rVert_F.\nonumber
\end{align}

Combining the results \cref{eq:hybrid_err_proof1,eq:hybrid_err_proof2,eq:hybrid_err_proof3} from the four steps above yields \cref{eqn:hybrid_err}.
To show \cref{eq:skelerr}, we use the following inequality:
\begin{align*}
&\lVert K^{(X,Y)}-UK^{(\hat{X},\hat{Y})}V^{T}\lVert _{F}\\
\leq & \ \lVert
K^{(X,Y)}-\tilde{K}^{(X,Y)}\rVert _{F}+\lVert {K}^{(X,Z)}\Phi ^{(Z,Y)}-UK^{(\hat{X},Z)}\Phi ^{(Z,Y)}\rVert _{F} \\
& \ +\lVert U\tilde{K}^{(\hat{X},Y)}-UK^{(\hat{X},Y)}\rVert _{F}+\lVert UK^{(\hat{X},Y)}-UK^{(\hat{X},\hat{Y})}V^{T}\rVert _{F}.
\end{align*}
Then the proof can proceed similarly.
\end{proof}

If $e$ in SRRQR factorizations is a constant, with fixed $N$, the two constants in \cref{eqn:hybrid_err} scale roughly as $s_1 = \mathcal{O} ( \sqrt{m})$ and $s_2 =\mathcal{O} (1)$. Moreover,
once the annulus region $\mathcal{A} ( 0; \gamma_2, \gamma_3)$ is fixed, the set $Y$ is completely irrelevant to the algorithm for obtaining the approximation \cref{eq:hybrid} and the error bound \cref{eqn:hybrid_err}. The column basis matrix $U$ and the set $\hat{X}$ of
representative points can be obtained with only the set $X$, and the error analysis in \cref{eqn:hybrid_err}
applies to any set $Y$ in $\mathcal{A} ( 0; \gamma_2, \gamma_3)$.

\begin{remark}
Note that our error analyses in the previous section and this section are not necessarily restricted to the particular kernel like
in \cref{eqn:kernel} or the proxy point approximation method. In fact, the error bounds
can be easily modified for more general kernels and/or with other approximation methods as long as
a relative error bound for the kernel function approximation is available. This bound is $\tau_1$ in
\cref{thm:hybrid_err}.
\end{remark}

We then use a comprehensive example to show the accuracies of the analytical compression and the hybrid compression, as well as the selections of
the proxy points and the representative points.

\begin{example}\label{ex3}
We generate a triangular finite element mesh on a rectangle domain $[0,2]\times[0,1]$ based on the package MESHPART \cite{meshpart}.
The two sets of points $X$ and $Y$ are the mesh points as shown in \cref{fig:mesh}, where $|X| = 821$, $|Y| = 4125$, $\gamma_1 = 0.3$, and $\gamma_2 = 0.45$.
We compute the low-rank approximation in \cref{eq:hybrid} and report the relative errors in the
analytical compression step and the hybrid low-rank approximation respectively:
\begin{equation*}
	\mathcal{E}_N (\gamma)=\frac{\|  K^{(X,Y)} -  \tilde{K}^{(X,Y)}  \|_F}{\| K^{(X,Y)} \|_F}, \quad
\mathcal{R}_N (\gamma)=\frac{\|  K^{(X,Y)} -  U K^{(\hat{X},Y)}  \|_F}{\| K^{(X,Y)} \|_F}.
\end{equation*}
\begin{figure}[tpbh]
\centering
\includegraphics[height=2.5in]{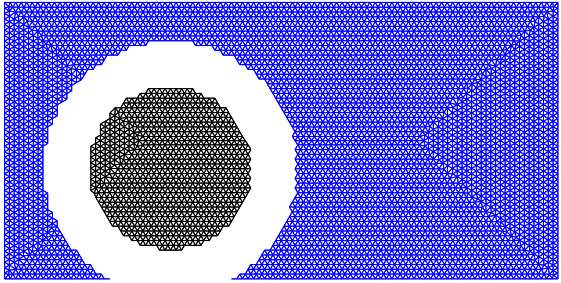}\vspace{-5mm}
\caption{\cref{ex3}: Sets $X$ and $Y$ in the mesh, where the image is based on the package MESHPART \cite{meshpart}.}
\label{fig:mesh}
\end{figure}
\end{example}

In the first set of tests, the number of proxy points $N$ is chosen to reach a relative tolerance $\tau_1=10 \varepsilon_{\text{mach}}$
in the proxy point method, where $\varepsilon_{\text{mach}}$ is the machine precision. (Note that $\tau_1$ is the tolerance for approximating
$\kappa(x,y)$, and the actual computed Frobenius-norm matrix approximation error $\mathcal{E}_N (\gamma)$ may be slightly larger due to
floating point errors.)

We vary the radius $\gamma$ for the proxy surface between $\gamma_1$ and $\gamma_2$. For $d=1,2,3,4$,
$\mathcal{E}_N (\gamma)$ and $\mathcal{R}_N (\gamma)$ are shown in \cref{fig:ex3_1}.
In practice, we can use the method in \cref{subsub:est} to obtain an approximate optimal radius $\tilde{\gamma}^*$.
To show that $\tilde{\gamma}^*$ is very close to the actual optimal radius, we can look at
\cref{fig:test_1_1} for $d=1$. Here, $N = 169$ and $\tilde{\gamma}^*=0.3675$ which is very close
to the actual optimal radius $0.3678$. In addition, the error bound in \cref{thm:rel_err_bound_d1}
can be used to provide another estimate $\sqrt{\gamma_1 \gamma_2} = 0.3674$. Both estimates are very
close to the actual minimizer, which indicates the effectiveness
of the error analysis and the minimizer estimations. When $\gamma = \tilde{\gamma}^*$, we
have $\mathcal{E}_N (\gamma)=3.2106E-16$ and $\mathcal{R}_N (\gamma)=1.1008E-15$, and
the numerical rank resulting from the hybrid compression is $78$. The numerical rank produced by SVD under a similar relative error is $68$.
\begin{figure}[tbhp]
\centering
\subfloat[$d=1$]{\label{fig:test_1_1}\includegraphics[scale=.43]{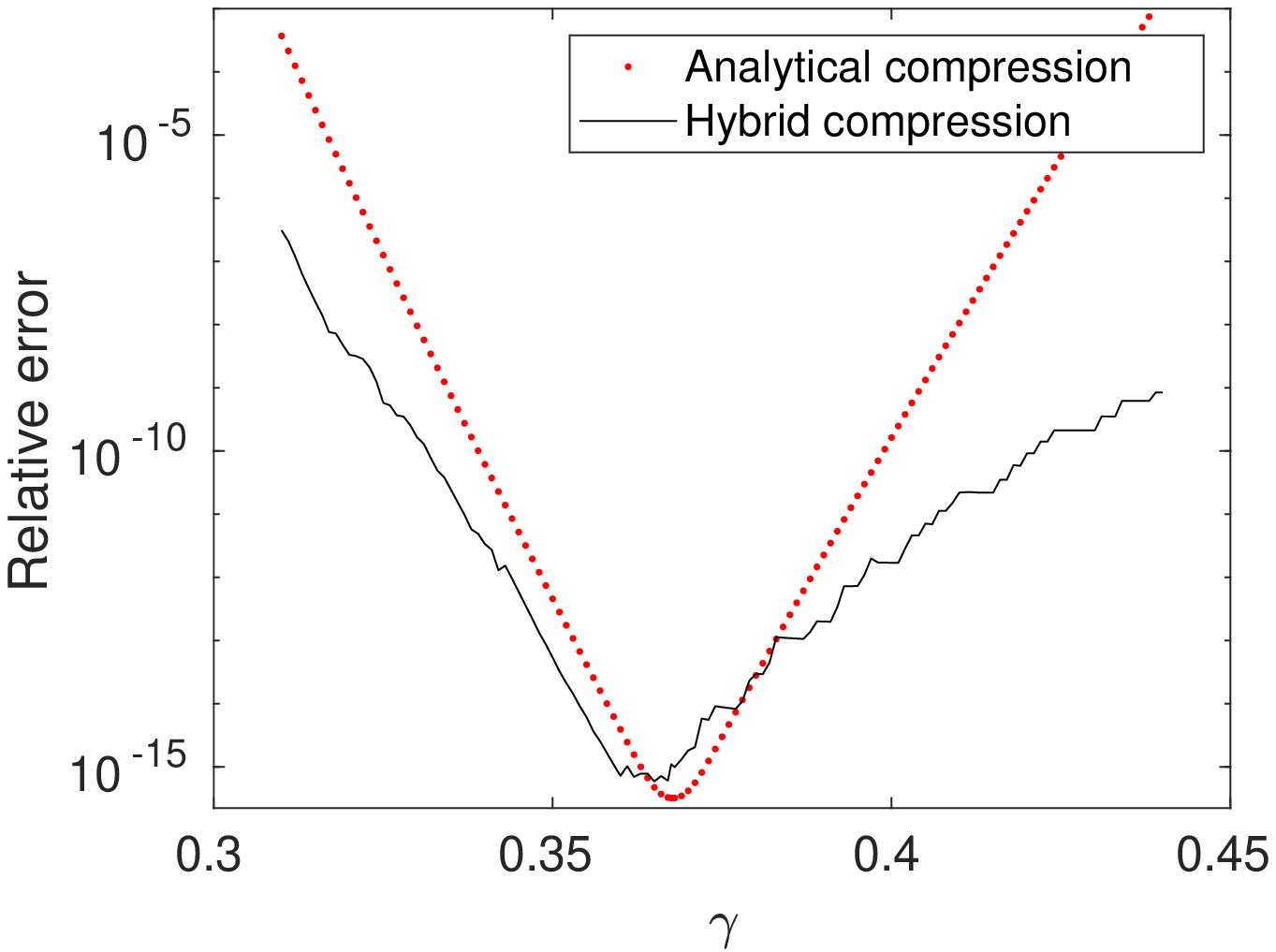}}
\subfloat[$d=2$]{\label{fig:test_1_2}\includegraphics[scale=.43]{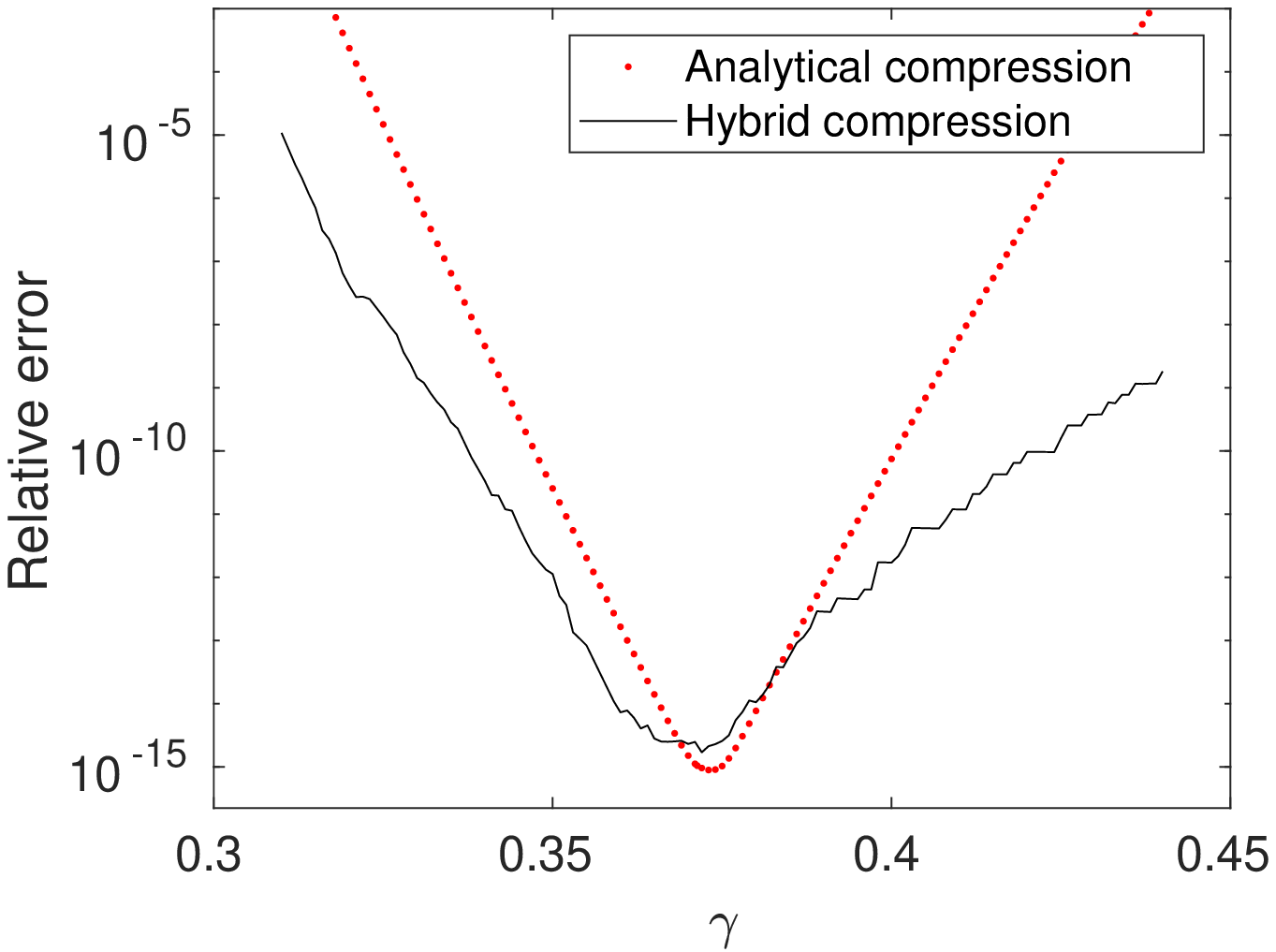}} \\
\subfloat[$d=3$]{\label{fig:test_1_3}\includegraphics[scale=.43]{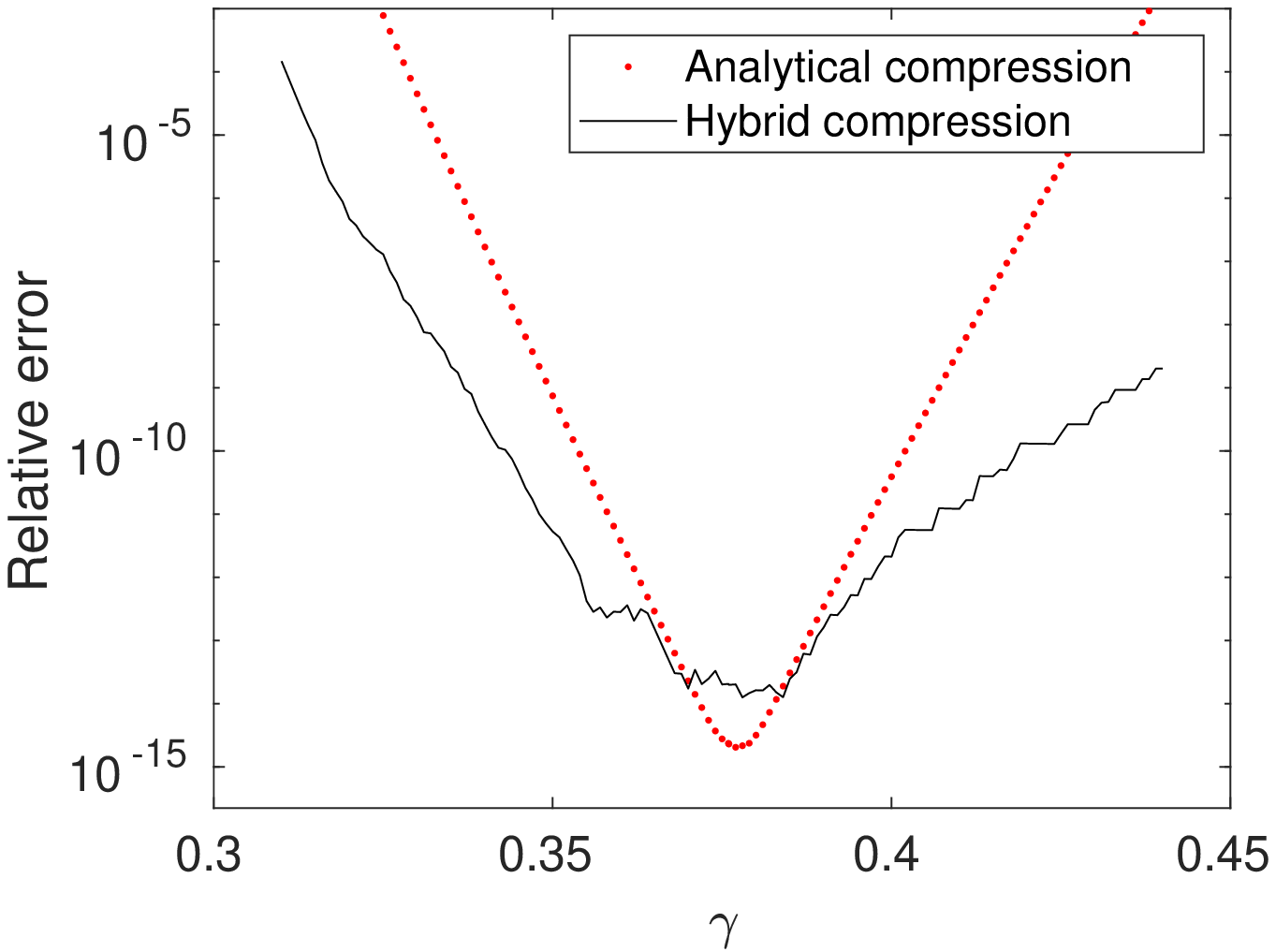}}
\subfloat[$d=4$]{\label{fig:test_1_4}\includegraphics[scale=.43]{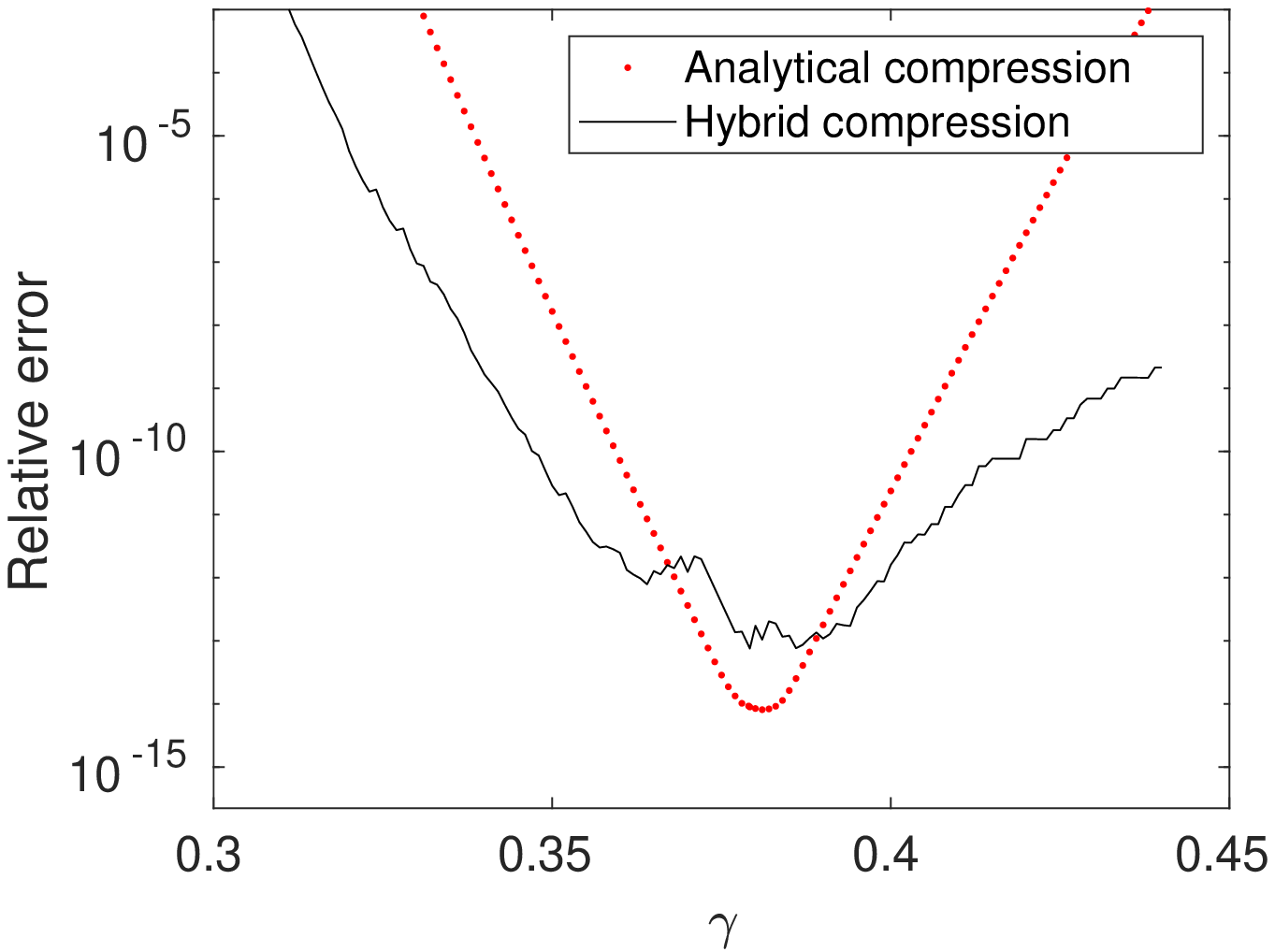}}
\caption{\cref{ex3}: $\mathcal{E}_N (\gamma)$ in the analytical compression step and $\mathcal{R}_N (\gamma)$ in the hybrid low-rank approximation with varying radius $\gamma$.}
\label{fig:ex3_1}
\end{figure}

Similar results are obtained for $d = 2, 3,4$. See \cref{fig:ex3_1} and \cref{tab:ex3_1}.
(We notice that $\mathcal{E}_N (\gamma)$ is sometimes larger than $\mathcal{R}_N (\gamma)$, especially
when $\gamma$ is closer to $X$ or $Y$. This
is likely due to the different amount of evaluations of the kernel function in the error computations.
The kernel function evaluations may have higher numerical errors when $\gamma$ gets closer to $\gamma_1$ or $\gamma_2$.
When $\gamma$ is not too close to $\gamma_1$ or $\gamma_2$, $\mathcal{R}_N (\gamma)$ is smaller than $\mathcal{E}_N (\gamma)$,
which is consistent with the theoretical estimates.
Here, no stabilization is integrated into the proxy point method (which may be fixed based on a technique in \cite{fmm1d}),
while SRRQR factorizations have full stability measurements and produce column basis matrices with controlled norms.
On the other hand, this also reflects that hybrid compression is a practical method.)

\begin{table}[tbhp]
    \centering
    \caption{\cref{ex3}: Hybrid compression results, where $\tilde{\gamma}^*$ is the approximate optimal radius.}
    \label{tab:ex3_1}
    \begin{tabular}{c|cccccc}\hline
    	$d$ & $N$ & Optimal $\gamma$ & $\tilde{\gamma}^*$ &  Numerical rank & $\mathcal{E}_N (\tilde{\gamma}^*)$ &$\mathcal{R}_N (\tilde{\gamma}^*)$  \\ \hline
    	$1$ & $169$ & $0.3678$ & $0.3675$ & $78$ &$3.2106E-16$ & $1.1008E-15$ \\
    	$2$ & $179$ & $0.3733$ & $0.3713$ & $88$ &$1.0431E-15$ & $2.1817E-15$ \\
    	$3$ & $187$ & $0.3774$ & $0.3759$ & $93$ &$2.3565E-15$ & $2.0537E-14$  \\
    	$4$ & $193$ & $0.3816$ & $0.3792$ & $99$ &$8.9381E-15$ & $7.5528E-14$  \\ \hline
    \end{tabular}
\end{table}

Also in \cref{fig:ex3_2} for $d=1,2$, we plot the proxy points as well as the representative points $\hat{X}$ produced by the
hybrid approximation with $\gamma = \tilde{\gamma}^*$.

\begin{figure}[tbhp]
\centering
\subfloat[$d=1$]{\label{fig:test_2_1}\includegraphics[height=1.8in]{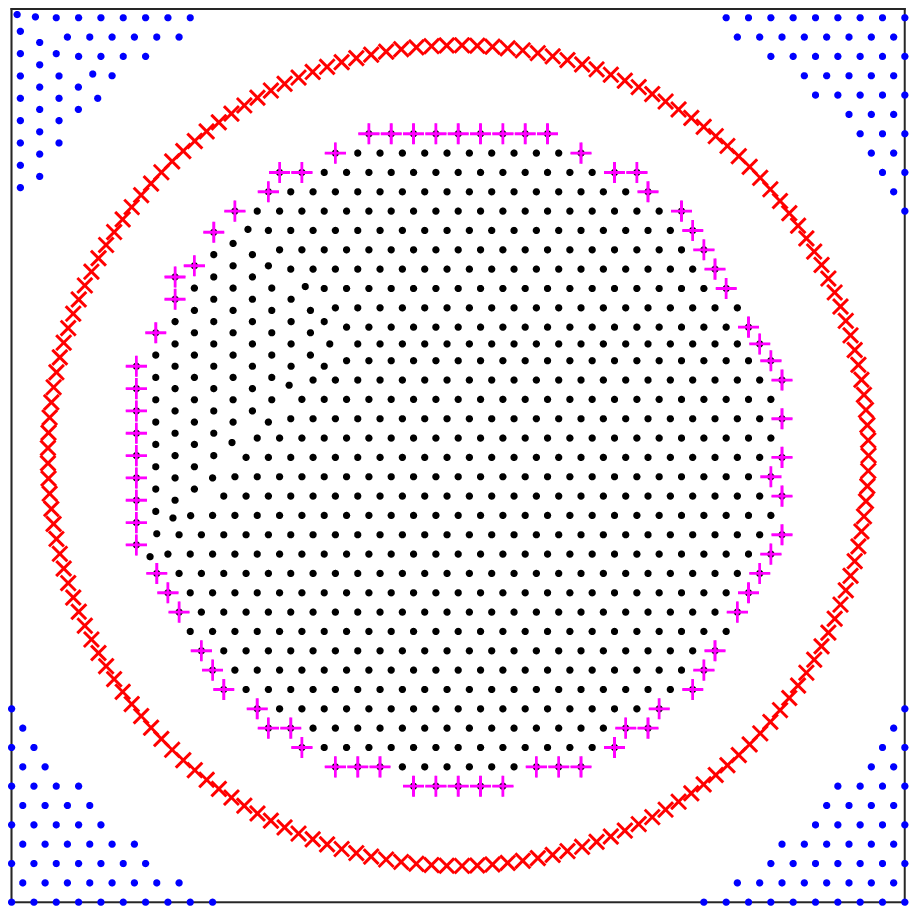}}
\subfloat[$d=2$]{\label{fig:test_2_2}\includegraphics[height=1.8in]{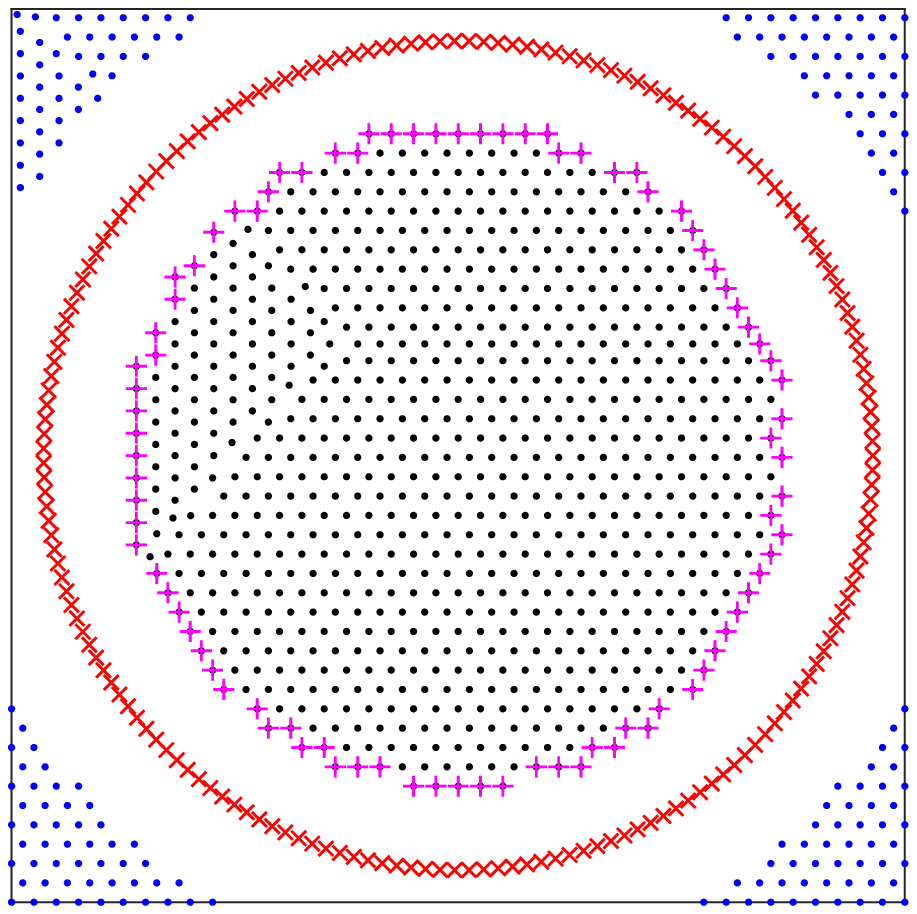}}
\caption{\cref{ex3}: Representative points ($+$ shapes) and proxy points ($\times$ shapes).}
\label{fig:ex3_2}
\end{figure}

In our next set of tests, we vary the number of proxy points $N$ for the analytical compression step and check its effect on the hybrid low-rank approximation error. For each $N$, the radius of the proxy surface $\gamma$ is set to be $\tilde{\gamma}^*$. The results are shown in \cref{fig:ex3_3}. The approximation error for the analytical compression decays exponentially as predicted by
\cref{thm:rel_err_bound_d1,cor:block_err} (until $N$ reaches the values indicated in \cref{tab:ex3_1}; after that point,
it stops to decay due to floating point errors).

\begin{figure}[tbhp]
\centering
\subfloat[$d=1$]{\label{fig:test_3_1}\includegraphics[scale=.43]{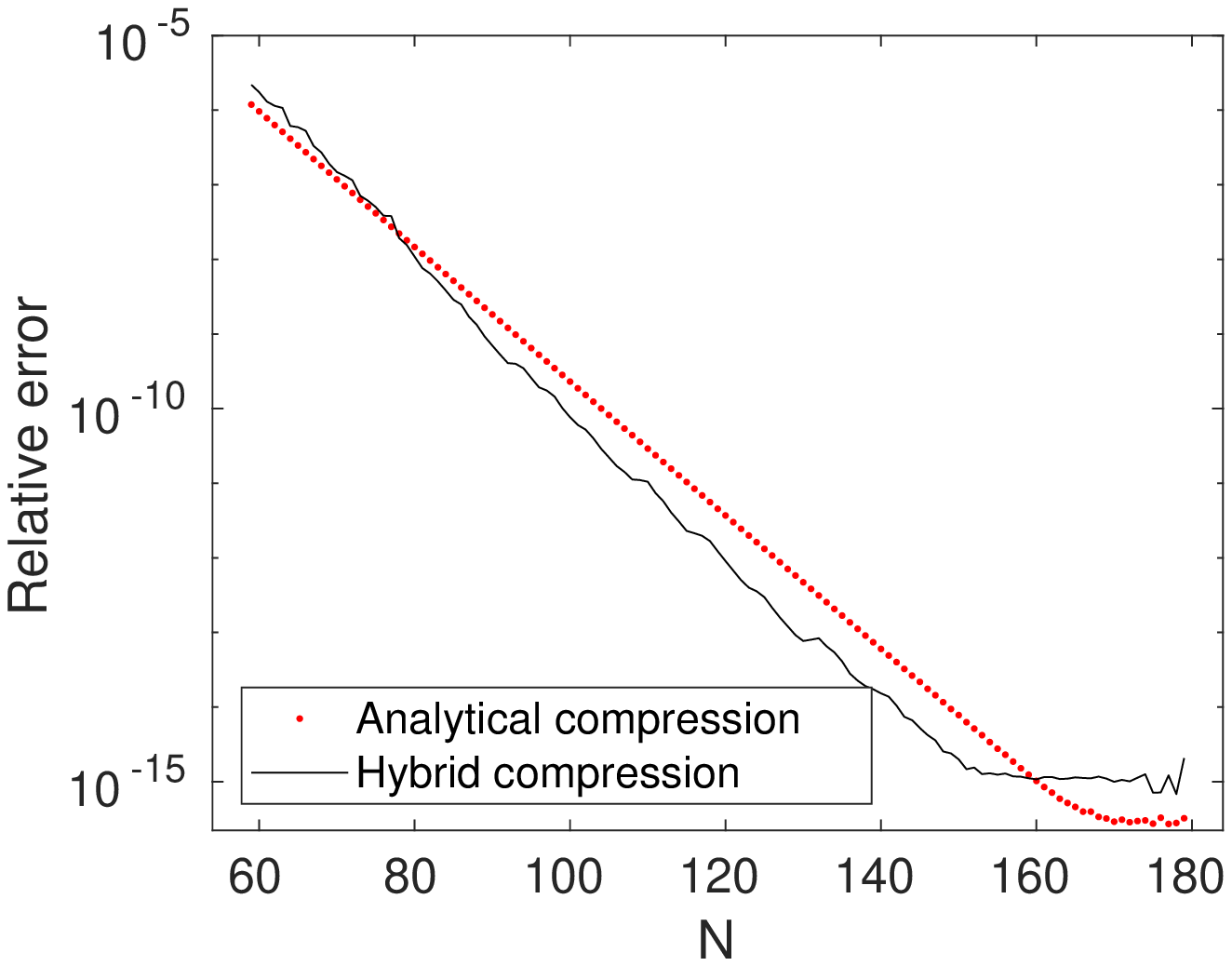}}
\subfloat[$d=2$]{\label{fig:test_3_2}\includegraphics[scale=.43]{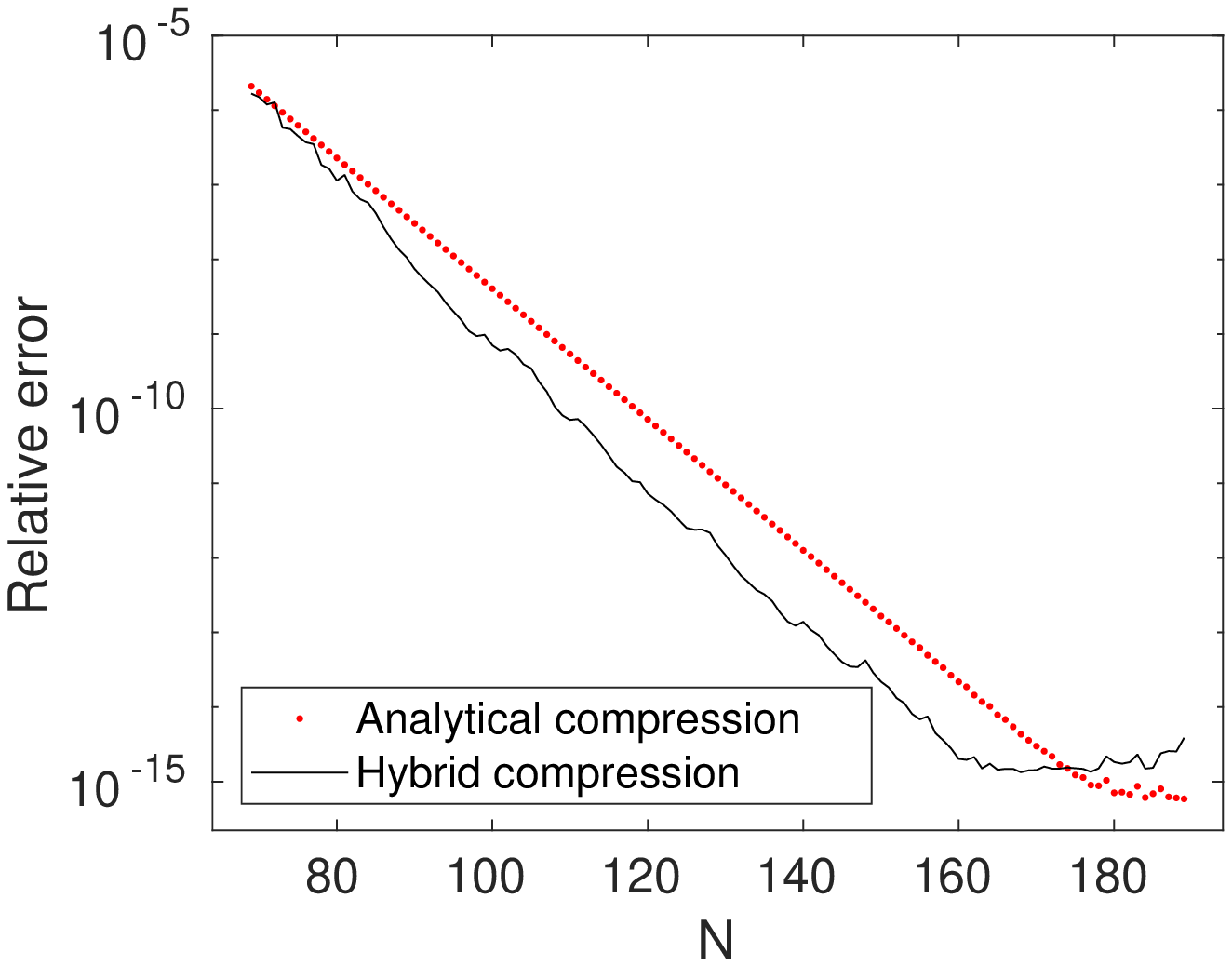}} \\
\subfloat[$d=3$]{\label{fig:test_3_3}\includegraphics[scale=.43]{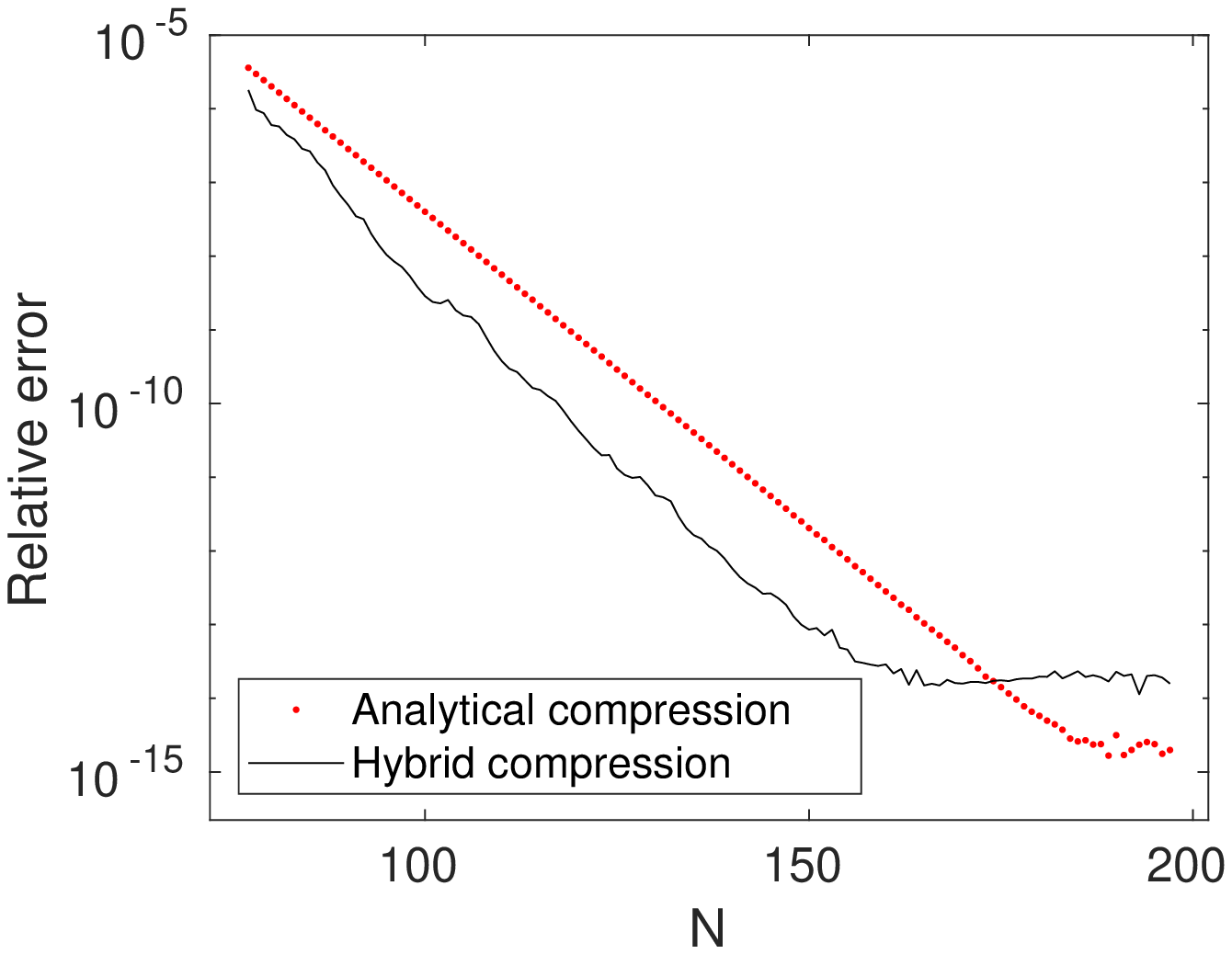}}
\subfloat[$d=4$]{\label{fig:test_3_4}\includegraphics[scale=.43]{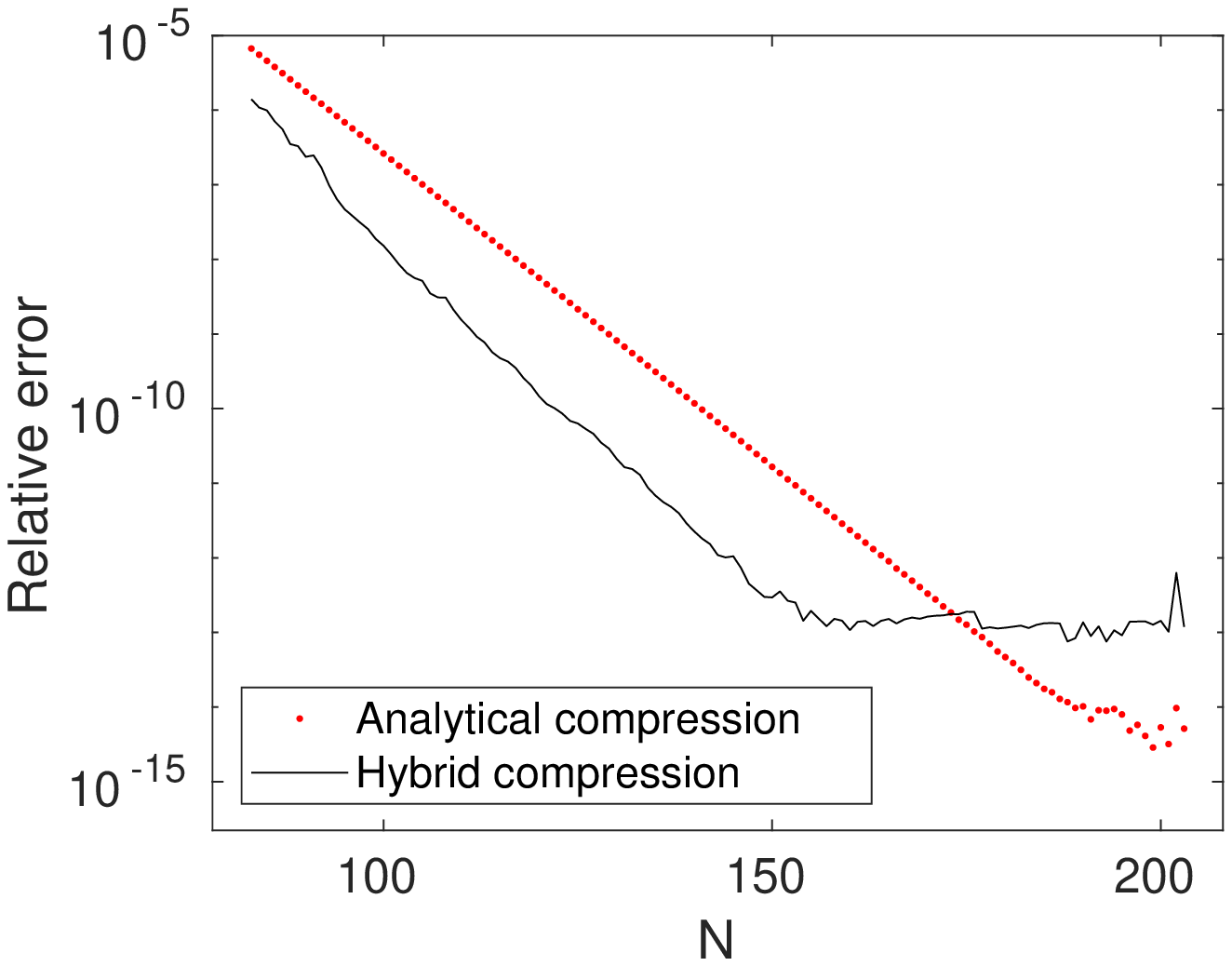}}
\caption{\cref{ex3}: Accuracies with $\gamma=\tilde{\gamma}^*$ and varying $N$.}
\label{fig:ex3_3}
\end{figure}

\section{Conclusions}
The proxy point method is a very simple and convenient strategy for computing low-rank approximations for kernel matrices
evaluated at well-separated sets. In this paper, we present an intuitive way of explaining
the method. Moreover, we provide rigorous approximation error analysis for the kernel function approximation
and low-rank kernel matrix approximation in terms of a class of important kernels. Based on the analysis, we show
how to choose nearly optimal locations of the proxy points. The work can serve as a starting point
to study the proxy point method for more general kernels and higher dimensions.
Some possible strategies in future work will be based on other kernel expansions or Cauchy FMM ideas \cite{letourneau2014cauchy}.
Various results here are already applicable to more general kernels and other approximation methods.
We also hope this work can draw more attentions from researchers in the field of matrix computations to study
and utilize such an elegant method.

\section*{Acknowledgments}
The authors would like to thank Steven Bell at Purdue University for some helpful discussions.

\bibliographystyle{siamplain}

\begin{thebibliography}{10}

\bibitem{anderson1992implementation}
{\sc C.~R. Anderson}, {\em An implementation of the fast multipole method without mltipoles}, 
SIAM J. Sci. Stat. Comput., 13 (1992), pp.~923--947.

\bibitem {bor02}\textsc{S. B\"{o}rm and W. Hackbusch}, \emph{Data-sparse approximation by
adaptive }$\mathcal{H}^{2}$\emph{-matrices}, Computing, 69
(2002), pp. 1--35.

\bibitem{fmm1d}
{\sc D.~Cai and J.~Xia}, {\em Bridging the gap between the fast multipole
  method and fast stable structured factorizations}. preprint, 2016.

\bibitem{fastodesol} {\sc R. H. Chan, J. Xia, and X. Ye}, \emph{Fast direct solvers for linear third-order differential equations}, preprint, 2016.

\bibitem{chandrasekaran2006fast}
{\sc S.~Chandrasekaran, M.~Gu, and T.~Pals}, {\em A fast $ulv$ decomposition solver for 
hierarchically semiseparable representations}, SIAM J. Matrix
  Anal. Appl., 28 (2006), pp.~603--622.

\bibitem {toep}\textsc{S. Chandrasekaran, M. Gu, X. Sun, J. Xia, and J. Zhu}, \emph{A superfast 
algorithm for Toeplitz systems of linear equations}, SIAM J. Matrix Anal. Appl., 29 (2007), pp. 1247--1266.

\bibitem{cheng2005compression}
{\sc H.~Cheng, Z.~Gimbutas, P.~G. Martinsson, and V.~Rokhlin},
{\em On the compression of low rank matrices}, SIAM J. Sci. Comput., 26 (2005),
  pp.~1389--1404.

\bibitem{eckart1936approximation}
{\sc C.~Eckart and G.~Young}, {\em The approximation of one matrix by another of lower rank}, Psychometrika, 1 (1936), pp.~211--218.

\bibitem{fong2009black}
{\sc W.~Fong and E.~Darve}, {\em The black-box fast multipole method}, J. Comput. Phys., 228 (2009), pp.~8712--8725.

\bibitem {meshpart}\textsc{J. R. Gilbert and S.-H. Teng}, \emph{MESHPART, A
Matlab Mesh Partitioning and Graph Separator Toolbox}, http://aton.cerfacs.fr/algor/Softs/MESHPART/.

\bibitem {gor01}\textsc{S. A. Goreinov and E. E. Tyrtyshnikov}, \emph{The maximal-volume concept in approximation by 
low-rank matrices}, in Contemporary
Mathematics, vol 280, 2001, pp. 47--52.

\bibitem{greengard1987fast}
{\sc L.~Greengard and V.~Rokhlin}, {\em A fast algorithm for particle simulations}, J. Comput. Phys., 73 (1987), pp.~325--348.

\bibitem{gu2015subspace}
{\sc M.~Gu}, {\em Subspace iteration randomization and singular value problems}, SIAM J. Sci. Comput., 37 (2015), pp.~A1139--A1173.

\bibitem {gu95}\textsc{M. Gu and S. C. Eisenstat}, \emph{A divide-and-conquer algorithm for the symmetric 
tridiagonal eigenproblem}, {SIAM} J. Matrix Anal. Appl., 16 (1995), pp.~79--92.

\bibitem{gu1996efficient}
{\sc M.~Gu and S.~C. Eisenstat}, {\em Efficient algorithms for computing a strong rank-revealing
QR factorization}, SIAM J. Sci. Comput., 17 (1996), pp.~848--869.

\bibitem{hackbusch1999sparse}
{\sc W.~Hackbusch}, {\em A sparse matrix arithmetic based on $\mathcal{H}$-matrices. 
{Part I}: Introduction to $\mathcal{H}$-matrices},
  Computing,  (1999), pp.~89--108.

\bibitem{hackbusch2000h2}
{\sc W.~Hackbusch, B.~Khoromskij, and S.~Sauter}, {\em On $\mathcal{H}^2$ matrices},
in Lectures on Applied Mathematics, Springer, Berlin, Heidelberg, 2000, pp.~9--29.

\bibitem{halko2011finding}
{\sc N.~Halko, P.~G. Martinsson, and J.~A. Tropp}, {\em Finding structure
with randomness: probabilistic algorithms for constructing approximate matrix decompositions}, SIAM Rev., 53 (2011), pp.~217--288.

\bibitem{ho2012fast}
{\sc K.~L. Ho and L.~Greengard}, {\em A fast direct solver for structured linear systems by
recursive skeletonization}, SIAM J. Sci. Comput., 34 (2012), pp.~A2507--A2532.

\bibitem{kestyn2016feast}
{\sc J.~Kestyn, E.~Polizzi, and P.~T.~P. Tang}, {\em {FEAST} eigensolver for {non-Hermitian}
problems}, SIAM J. Sci. Comput., 38 (2016), pp.~S772--S799.

\bibitem{kis17}
{\sc N. Kishore Kumar and J. Schneider}, {\em Literature survey on low rank approximation of matrices},
Linear Multilinear Algebra, 65 (2017), pp. 2212--2244.

\bibitem{kress2014linear} {\sc R.~Kress}, {\em Linear Integral Equations, Third Edition}, Springer, 2014.

\bibitem{liu2016parallel}
{\sc X.~Liu, J.~Xia, and M.~V. de~Hoop}, {\em Parallel randomized and matrix-free direct solvers
for large structured dense linear systems}, SIAM J. Sci. Comput., 38 (2016), pp.~S508--S538.

\bibitem{letourneau2014cauchy}
{\sc P.-D. Létourneau, C.~Cecka, and E.~Darve}, {\em Cauchy fast multipole method for
general analytic kernels}, SIAM J. Sci. Comput., 36 (2014), pp.~A396--A426.

\bibitem{mah09}{\sc M. W. Mahoney and P. Drineas}, {\em CUR matrix decompositions for improved data
analysis}, Proc. Natl. Acad. Sci. USA, 106 (2009), pp. 697--702.


\bibitem{makino1999another}
{\sc J.~Makino}, {\em Yet another fast multipole method without multipoles--pseudoparticle
multipole method}, J. Comput. Phys., 151 (1999), pp.~910--920.

\bibitem{martinsson2017householder}
{\sc P.-G. Martinsson, G.~Q. OrtÍ, N.~Heavner, and R.~van~de Geijn}, {\em Householder {QR}
factorization with randomization for column pivoting ({HQRRP})}, SIAM J. Sci. Comput., 39 (2017), pp.~C96--C115.

\bibitem{martinsson2005fast}
{\sc P.~G. Martinsson and V.~Rokhlin}, {\em A fast direct solver for boundary integral equations
in two dimensions}, J. Comput. Phys., 205 (2005), pp.~1--23.

\bibitem{martinsson2007accelerated}
{\sc P.~G. Martinsson and V.~Rokhlin}, {\em An accelerated kernel-independent fast multipole method in one dimension},
SIAM J. Sci. Comput., 29 (2007), pp.~1160--1178.

\bibitem {mar05}\textsc{P. G. Martinsson, V. Rokhlin, and M. Tygert}, \emph{A fast algorithm for the inversion
of general Toeplitz matrices}, Comput. Math. Appl. 50 (2005), pp.~741--752.

\bibitem{minden2017recursive}
{\sc V.~Minden, K.~L. Ho, A.~Damle, and L.~Ying},
{\em A recursive skeletonization factorization based on strong admissibility}, Multiscale  Model. Simul., 15 (2017), pp.~768--796.

\bibitem{miranian2003strong}
{\sc L.~Miranian and M.~Gu}, {\em Strong rank-revealing {LU} factorizations},  Linear Algebra Appl., 367 (2003), pp.~1--16.

\bibitem{oneil2007new}
{\sc M.~O'Neil and V.~Rokhlin}, {\em A new class of analysis-based fast transforms}. technical report, 2007.

\bibitem{pan15}
{\sc V. Y. Pan}, {\em Transformations of matrix structures work again}, Linear Algebra Appl., 465 (2015), pp. 107--138.

\bibitem{stein2003complex} {\sc E.~M. Stein and R.~Shakarchi}, {\em Complex analysis}, Princeton University Press, 2003.

\bibitem{sun2001matrix}
{\sc X.~Sun and N.~P. Pitsianis}, {\em A matrix version of the fast multipole method}, SIAM Rev., 43 (2001), pp.~289--300.

\bibitem{trefethen2014exponentially}
{\sc L.~N. Trefethen and J.~A.~C. Weideman}, {\em The exponentially convergent trapezoidal rule}, SIAM Rev., 56 (2014), pp.~385--458.

\bibitem {tyr96}\textsc{E. E. Tyrtyshnikov}, \emph{Mosaic-skeleton approximations}, Calcolo, 33 (1996), 47--57.

\bibitem{hsseig} {\sc J. Vogel, J. Xia, S. Cauley, and V. Balakrishnan}, {\em Superfast divide-and-conquer method and perturbation analysis for structured eigenvalue solutions}, SIAM J. Sci. Comput., 38 (2016), pp. A1358--A1382.

\bibitem {mfhssrs}\textsc{J. Xia}, \emph{Randomized sparse direct solvers}, SIAM J. Matrix Anal. Appl., 34 (2013), pp. 197--227.

\bibitem{xia2010fast}
{\sc J.~Xia, S.~Chandrasekaran, M.~Gu, and X.~S. Li}, {\em Fast algorithms for hierarchically
semiseparable matrices}, Numer. Linear Algebra Appl., 17  (2010), pp.~953--976.

\bibitem{toeprs}
{\sc J.~Xia, Y.~Xi, and M.~Gu}, {\em A superfast structured solver for Toeplitz linear
systems via randomized sampling}, SIAM J. Matrix Anal. Appl., 33 (2012), pp.~837--858.


\bibitem{xing2018efficient}
{\sc X.~Xing and E.~Chow}, {\em An efficient method for block low-rank approximations for kernel matrix systems}.
\newblock preprint, 2018.

\bibitem{ye2017fast}
{\sc X.~Ye, J.~Xia, R.~H. Chan, S.~Cauley, and V.~Balakrishnan}, {\em A fast contour-integral eigensolver 
for non-hermitian matrices}, SIAM J. Matrix Anal. Appl., 38 (2017), pp.~1268--1297.

\bibitem{ying2006kernel}
{\sc L.~Ying}, {\em A kernel independent fast multipole algorithm for radial  basis functions}, J. Comput. Phys., 213 (2006), pp.~451--457.

\bibitem{ying2004kernel}
{\sc L.~Ying, G.~Biros, and D.~Zorin}, {\em A kernel-independent adaptive fast multipole algorithm in two and three dimensions},
J. Comput. Phys., 196 (2004), pp.~591--626.

\end{thebibliography}

\end{document}